\newtheorem{theorem}{Theorem}[section]
\newtheorem{defn}[theorem]{Definition}
\newtheorem{lem}[theorem]{Lemma}
\newtheorem{cor}[theorem]{Corollary}
\newtheorem{prop}[theorem]{Proposition}
\newtheorem{rem}[theorem]{Remark}
\newtheorem{example}[theorem]{Example}
\DeclareMathOperator{\core}{core}
\DeclareMathOperator{\Core}{\mathcal{C}}
\DeclareMathOperator{\Par}{\mathcal{P}}
\newcommand{\numcore}{c_t}
\newcommand{\coresum}{C_t}
\newcommand{\ds}{\displaystyle}
\begin{document}

\title
[The size of $t$-cores and hook lengths of random cells]
{The size of $t$-cores and hook lengths of random cells in random partitions}

\author{Arvind Ayyer}
\address{Arvind Ayyer, Department of Mathematics, Indian Institute of Science, Bangalore - 560012, India}
\email{arvind@iisc.ac.in}

\author{Shubham Sinha}
\address{Shubham Sinha, Department of Mathematics, University of California San Diego, 9500 Gilman Drive, La Jolla, CA 92093, USA}
\email{shs074@ucsd.edu}

\date{\today}

\begin{abstract}
			Fix $t \geq 2$. We first give an asymptotic formula for certain sums of the number of $t$-cores. 
			We then use this result to compute the distribution of the size of the $t$-core of a uniformly random partition of an integer $n$. 
			We show that this converges weakly to a gamma distribution after dividing by $\sqrt{n}$. As a consequence, we find that the size of the $t$-core is of the order of $\sqrt{n}$ in expectation. We then apply this result to show that the probability that $t$ divides the hook length of a uniformly random cell in a uniformly random partition equals $1/t$ in the limit. Finally, we extend this result to all modulo classes of $t$ using abacus representations for cores and quotients.
\end{abstract}

\subjclass[2010]{60B10, 60C05, 05A15, 05A17, 05E10, 11P82}
\keywords{$t$-core, $t$-quotient, uniformly random partition, gamma distribution, hook length, random cell, continual Young diagram, abacus}

\maketitle

\section{Introduction and statement of results}

	The irreducible representation of the symmetric group $S_n$ are indexed by partitions of $n$. While studying modular representations of $S_n$, one naturally encounters special partitions called {\em $t$-cores}~\cite{james-kerber-2009}, which are defined for any integer $t \geq 2$. The $t$-core of a partition $\lambda$, denoted $\core_t(\lambda)$, can be obtained from $\lambda$ by a sequence of operations.
	See Section~\ref{sec:core-quo} for the precise definitions.
	A partition $\lambda$ is itself called a $t$-core if $\core_t(\lambda) = \lambda$.
	Let $\numcore(n)$ be the number of $t$-cores of size $n$. 
	We define the number of partitions obtained by taking $t$-cores of all partitions of $n$ by
	\begin{equation}
		\label{def-coresum}
		\coresum(n):= \# \{\core_t(\lambda) \mid \lambda \quad \text{a partition of $n$} \}. 
	\end{equation}
	When $t$ is a prime number, $C_t(n)$ can be defined to be the number of $t$-blocks, that is, the number of connected components of the Brauer graph, in the $t$-modular representation theory of the symmetric group $S_n$~\cite{granville-ono-1996}.
	
	We will show in Proposition~\ref{prop:coresum-formula} that $\coresum(n)$ can be expressed as a sum over $\numcore(n)$ and we 
	are interested in finding the asymptotic behavior of $\coresum(n)$. Although Anderson~\cite[Theorem 2]{anderson-2008} has obtained powerful asymptotic results with precise error estimates for $\numcore(n)$ using the circle method, these will not suffice for our purposes.
	Our first main result is the following.
	
	\begin{theorem}
		\label{thm:sum}
		Fix $t \geq 2$. Then
		\begin{equation}
			\coresum(n)= \frac{(2\pi)^{(t-1)/2} }{t^{(t+2)/2} \, \Gamma(\frac{t+1}{2})} \left( n+\frac{t^2-1}{24} \right)^{{(t-1)/2}} + O(n^{(t-2)/2}),
		\end{equation}
		where $\Gamma$ is the standard gamma function.
	\end{theorem}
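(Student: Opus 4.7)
My plan is to analyze the generating function of $\coresum(n)$ via the Dedekind eta function and extract coefficients by a circle-method argument.

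First I would invoke Proposition~\ref{prop:coresum-formula}, which expresses $\coresum(n) = \sum_{k \le n,\; k \equiv n \pmod{t}} \numcore(k)$. Together with the standard generating function $\sum_k \numcore(k)\, q^k = \prod_{n \ge 1}(1-q^{tn})^t/(1-q^n)$, this yields
\begin{equation*}
\sum_{n \ge 0} \coresum(n)\, q^n \;=\; \frac{1}{1-q^t}\prod_{n \ge 1}\frac{(1-q^{tn})^t}{1-q^n} \;=\; \frac{q^{-(t^2-1)/24}}{1-q^t}\cdot\frac{\eta(t\tau)^t}{\eta(\tau)},
\end{equation*}
where $q = e^{2\pi i\tau}$. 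The shift $(t^2-1)/24$ appearing in the theorem is exactly the power of $q$ that converts the product into an eta quotient, and one should already expect it to survive to the final asymptotic.

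Next I would determine the behavior of this series as $q \to 1^-$. Writing $\tau = is/(2\pi)$ with $s \to 0^+$, the modular transformation $\eta(-1/\tau) = \sqrt{-i\tau}\,\eta(\tau)$ gives the dominant expansions
\begin{equation*}
\eta(\tau) \sim \sqrt{2\pi/s}\; e^{-\pi^2/(6s)},\qquad \eta(t\tau)^t \sim (2\pi/(ts))^{t/2}\, e^{-\pi^2/(6s)},
\end{equation*}
so the exponential factors cancel in the quotient. Combined with $1/(1-q^t) \sim 1/(ts)$, this produces
\begin{equation*}
\sum_n \coresum(n)\, q^n \;\sim\; \frac{(2\pi)^{(t-1)/2}}{t^{(t+2)/2}}\cdot s^{-(t+1)/2}\cdot e^{(t^2-1)s/24}
\end{equation*}
as $s \to 0^+$, which already displays the leading constant of the theorem.

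To turn this singular expansion into the stated coefficient asymptotic, I would apply the Hardy--Ramanujan circle method to Cauchy's integral for $\coresum(n)$. On the major arc near $q=1$, saddle-point evaluation at $s_0 \asymp n^{-1/2}$ extracts the main term; the factor $e^{(t^2-1)s/24}$ is absorbed into the shift $n \mapsto n + (t^2-1)/24$ inside the power via the standard Laplace-inversion identity that sends $s^{-(t+1)/2}e^{cs}$ to the coefficient $(n+c)^{(t-1)/2}/\Gamma((t+1)/2)$. Minor-arc contributions near other primitive roots of unity $e^{2\pi i h/k}$ are controlled using the transformation formula of $\eta$ at the cusp $h/k$, and contribute at most $O(n^{(t-2)/2})$. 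The main obstacle is precisely this sharp error bound: a soft Tauberian theorem gives only $o(n^{(t-1)/2})$, so one must retain the next-order correction in the expansion of $\eta(t\tau)^t/\eta(\tau)$ at $q=1$ (which produces the subleading $n^{(t-3)/2}$ contribution absorbed into the error) and carefully estimate the minor-arc integrals, a standard but technical application of the circle method for eta quotients.
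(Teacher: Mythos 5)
Your route is genuinely different from the paper's: you work analytically with the generating function $\sum_n \coresum(n)q^n=(1-q^t)^{-1}\prod_{k\ge1}(1-q^{tk})^t/(1-q^k)$ and the circle method, whereas the paper converts the problem into elementary lattice-point counting. (Theorem~\ref{thm:eq-sols} identifies $\numcore(n)$ with the number of points of $\Lambda_t=H_t\cap\mathbb{Z}^t$ on the sphere $F_t(p)=n$, so by Proposition~\ref{prop:coresum-formula} the sum $\coresum(n)$ counts lattice points in the ball $F_t(p)\le n$ lying in the residue classes with $F_t\equiv n\bmod t$, and the Gauss-type count of Proposition~\ref{prop:volume-ellipsoid} gives volume plus $O(r^{d-1})=O(n^{(t-2)/2})$ in one stroke, uniformly for all $t\ge2$.) Your generating-function identity, the expansion as $q\to1^-$, and the resulting main term --- including the survival of the $(t^2-1)/24$ shift --- are all correct and reproduce the constant in the theorem.

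The genuine gap is the minor-arc bound, which you assert is ``at most $O(n^{(t-2)/2})$'' and ``standard.'' It is neither, and it is precisely the obstruction the paper's method is designed to avoid. At the cusps $h/k$ with $k>1$ and $\gcd(k,t)=1$, the quotient $\eta(t\tau)^t/\eta(\tau)$ neither grows nor decays exponentially; the resulting contributions are what limit Anderson's error term for $\numcore(n)$ to $O(n^{(t-1)/4})$ and force his restriction to $t\ge6$, and the extra factor $(1-q^t)^{-1}$ is bounded at those cusps so it does not help you. For $t\ge3$ an Anderson-quality bound $O(n^{(t-1)/4})$ would indeed be $O(n^{(t-2)/2})$, but you would have to reprove those estimates for your modified generating function (and verify that the genuine poles of $(1-q^t)^{-1}$ at the other $t$-th roots of unity are killed by the exponential decay of the eta quotient there), none of which is a citation to a standard fact. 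More decisively, for $t=2$ the claimed error is $O(n^0)=O(1)$, and no minor-arc analysis of a weight-$1/2$ eta quotient can produce an $O(1)$ error --- one gets at best something like $O(n^{1/4+\epsilon})$ --- whereas the lattice-point count trivially gives $O(1)$ in dimension $t-1=1$. The remark following Corollary~\ref{cor:numcore-size} makes this point explicitly: the authors could not obtain Theorem~\ref{thm:sum} from the circle-method asymptotics. So your proposal establishes the main term for large $t$ but leaves the stated error bound, and the cases $t\in\{2,3,4,5\}$, unproven.
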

	
	We have emphasized the factor $(t^2-1)/24$ in Theorem~\ref{thm:sum} instead of absorbing it into the error term for two reasons. The first is to compare with the result of Anderson~\cite[Theorem 2]{anderson-2008}; see Remark~\ref{rem:anderson} below. The second is that this quantity occurs naturally in our formulas
	for counting lattice points inside an appropriate sphere; see Proposition~\ref{prop:volume}.
	
	\begin{cor}
		\label{cor:numcore-size}
		Fix $t \geq 2$. Then $\numcore(n) = O(n^{(t-2)/2})$.
	\end{cor}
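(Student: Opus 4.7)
The plan is to exploit Proposition~\ref{prop:coresum-formula} together with Theorem~\ref{thm:sum} via a simple telescoping argument. By the standard theory of $t$-cores, a $t$-core $\mu$ arises as $\core_t(\lambda)$ for some partition $\lambda$ of $n$ precisely when $|\mu| \leq n$ and $n - |\mu|$ is a nonnegative multiple of $t$ (given such $\mu$, one can always attach $(n-|\mu|)/t$ disjoint horizontal $t$-rim hooks to realize $\mu$ as the $t$-core of a partition of $n$). Thus Proposition~\ref{prop:coresum-formula} should read
\[
\coresum(n) \;=\; \sum_{\substack{0 \le m \le n \\ m \equiv n \pmod{t}}} \numcore(m).
\]

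From this identity the telescoping observation is immediate: for $n \geq t$,
\[
\numcore(n) \;=\; \coresum(n) - \coresum(n - t),
\]
with the convention $\coresum(m) = 0$ for $m < 0$, so small values of $n$ are handled by a finite check. This reduces the corollary to bounding the right-hand side using Theorem~\ref{thm:sum}.

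Now I would substitute the asymptotic formula from Theorem~\ref{thm:sum} for both $\coresum(n)$ and $\coresum(n-t)$. The two leading terms differ by
\[
\frac{(2\pi)^{(t-1)/2}}{t^{(t+2)/2}\,\Gamma\!\left(\frac{t+1}{2}\right)} \left[ \left(n+\tfrac{t^2-1}{24}\right)^{(t-1)/2} - \left(n - t +\tfrac{t^2-1}{24}\right)^{(t-1)/2}\right].
\]
By the mean value theorem (or a binomial expansion) applied to $x \mapsto x^{(t-1)/2}$, this bracketed difference is $O(n^{(t-3)/2})$, which is \emph{smaller} than the $O(n^{(t-2)/2})$ error supplied by Theorem~\ref{thm:sum}. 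Summing the two error terms keeps the total bound at $O(n^{(t-2)/2})$, yielding $\numcore(n) = O(n^{(t-2)/2})$ as desired.

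I don't foresee a genuine obstacle: the argument is a routine ``telescoping plus error propagation'' deduction, once Proposition~\ref{prop:coresum-formula} and Theorem~\ref{thm:sum} are in hand. The only mild points of care are (i) confirming the precise indexing of the sum in Proposition~\ref{prop:coresum-formula} so that $\coresum(n) - \coresum(n-t)$ cleanly isolates $\numcore(n)$, and (ii) noting that the leading-order cancellation is not accidental but reflects the fact that Theorem~\ref{thm:sum} already captures the dominant $n^{(t-1)/2}$ growth of the cumulative count $\coresum$, leaving the individual count $\numcore(n)$ to sit at one order lower.
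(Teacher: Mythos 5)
Your proposal is correct and follows essentially the same route as the paper: both isolate $\numcore(n)=\coresum(n)-\coresum(n-t)$ via Proposition~\ref{prop:coresum-formula}, bound the difference of the leading terms by $O(n^{(t-3)/2})$ using the mean value theorem, and let the $O(n^{(t-2)/2})$ error from Theorem~\ref{thm:sum} dominate. No gaps.
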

	
	\begin{rem}
		\label{rem:anderson}
		In our notation, Anderson's result~\cite[Theorem~2]{anderson-2008} reads
		\begin{equation}
			\label{anderson}
			\numcore(n) = \frac{(2\pi)^{(t-1)/2} A_t(n)}
			{t^{t/2} \, \Gamma(\frac{t-1}{2})} 
			\left( n+\frac{t^2-1}{24} \right)^{{(t-3)/2}} + O(n^{(t-1)/4})
		\end{equation}
		for $t \geq 6$, where $A_t(n)$ is a complicated number-theoretic double sum, which satisfies 
		$0.05 < A_t(n) < 2.62$ for all $n$ if $t \geq 6$~\cite[Proposition~6]{anderson-2008}. 
		We believe it should also be possible to obtain Theorem~\ref{thm:sum} for $t \geq 6$ as a consequence of Anderson's formula by carefully summing over $A_t(n)$. 
		Our approach is very different and holds for $t \geq 2$.
		Note however that our error term is considerably weaker than the one in \eqref{anderson}. Therefore, there is no hope of using our approach to improve Anderson's result.

		It turns out (see Proposition~\ref{prop:coresum-formula}) that $\numcore(n) = \coresum(n) - \coresum(n-t)$. Therefore $\numcore(n)/t$ can be thought of as a formal derivative of $\coresum(n)$ with respect to $n$.
		If we formally differentiate the expression for $\coresum(n)$ in Theorem~\ref{thm:sum} with respect to $n$ and multiply by $t$, the first term exactly matches with that in \eqref{anderson} except for the factor of $A_t(n)$.
	\end{rem}
	
	Theorem~\ref{thm:sum} and Corollary~\ref{cor:numcore-size} will be proved in Section~\ref{sec:asymp}.
	
	Let $n$ be a positive integer and $t \geq 2$ be a fixed positive integer as before. Let $\lambda$ be a uniformly random partition of $n$.
	Let $Y_n$ be a random variable on $\mathbb{N}_{\geq 0}$ given by
	\begin{equation}
		Y_n \equiv Y_{n,t} = |\core_t(\lambda)|,
	\end{equation}
	where $|\cdot |$ denotes the size of the partition.
	We will be interested in the convergence of $Y_n$. 
	The probability mass function of $Y_n$ is given by
	\begin{equation}
		\label{def-mu}
		\mu_n(k) \equiv \mu_{n,t}(k)=\frac{\#\{\lambda\vdash n:|\core_t(\lambda)|= k \}}{p(n)},
	\end{equation}
	where $p(n)$ is the number of partitions of $n$.
	It can be shown (see Corollary~\ref{cor:number-fixed-cores}) that $\mu_n$
	can be written as 
	\begin{equation}
		\mu_{n}(k) =\frac{\numcore(k) d_t(n-k)}{p(n)},
	\end{equation} 
	where $d_t(m)$ is the number of partitions of $m$ with empty $t$-core.
	Recall that the {\em gamma distribution} with {\em shape parameter} $\alpha > 0$ and {\em rate parameter} $\beta > 0$ is a continuous random variable on $[0,\infty)$ with density given by
	\begin{equation}
		\label{gamma-density}
		\begin{cases}
			\ds \frac{\beta^\alpha}{\Gamma(\alpha)}x^{\alpha-1} \exp(-\beta x), & x\ge 0,\\
			0, & x<0.
		\end{cases}
	\end{equation} 
	The cumulative distribution function (CDF) of the gamma distribution is given by the function $\gamma(\alpha, \beta x)/\Gamma(\alpha)$, where
	\[
	\gamma(s,x) = \int_0^x \; \text{d}t \; t^{s-1} \exp(-t)
	\]
	is the {\em lower incomplete gamma function}.
	
	\begin{theorem}
		\label{thm:conv}
		The random variable $Y_n/\sqrt{n}$ converges weakly and in moments to a gamma-distributed random variable $Y$ with shape parameter $\alpha=(t-1)/2$ and rate parameter $\beta=\pi/\sqrt{6}$.
	\end{theorem}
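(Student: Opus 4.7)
I aim to prove convergence of the cumulative distribution functions of $Y_n/\sqrt n$ at every continuity point $x > 0$:
\begin{equation*}
F_n(x) := \sum_{k=0}^{\lfloor x\sqrt n\rfloor} \numcore(k)\,\frac{d_t(n-k)}{p(n)} \;\longrightarrow\; \frac{\gamma(\alpha,\beta x)}{\Gamma(\alpha)},\qquad \alpha=\tfrac{t-1}{2},\ \beta=\tfrac{\pi}{\sqrt 6}.
\end{equation*}
Weak convergence follows, and moment convergence will be recovered at the end via a uniform-integrability estimate.

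\textbf{Step 1: asymptotics of $g_n(k):=d_t(n-k)/p(n)$.} From the Hardy--Ramanujan formula $p(n)\sim(4\sqrt 3\,n)^{-1}e^{\pi\sqrt{2n/3}}$ and the analogous Hardy--Ramanujan-type asymptotic for $d_t(m)$ (which follows from $\sum_m d_t(m)q^m=\prod_{i\ge 1}(1-q^{ti})^{-t}$, a power of the Dedekind eta function), together with the Taylor expansion
\begin{equation*}
\pi\sqrt{2(n-k)/3}-\pi\sqrt{2n/3} = -\beta k/\sqrt n + O(k^2/n^{3/2}),
\end{equation*}
one obtains, uniformly for $k=O(\sqrt n)$,
\begin{equation*}
g_n(k) = c_0\, n^{-(t-1)/4}\, e^{-\beta k/\sqrt n}\bigl(1+o(1)\bigr),
\end{equation*}
for an explicit constant $c_0=c_0(t)$.

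\textbf{Step 2: Abel summation against $\coresum$.} Theorem~\ref{thm:sum} controls only the partial sums $\coresum(k)\sim C\,k^{(t-1)/2}$, where $C=(2\pi)^{(t-1)/2}/[t^{(t+2)/2}\,\Gamma(\tfrac{t+1}{2})]$, and gives no pointwise information on $\numcore(k)$. I would therefore apply summation by parts:
\begin{equation*}
F_n(x) = \coresum(K)\,g_n(K) + \sum_{k=0}^{K-1} \coresum(k)\bigl(g_n(k)-g_n(k+1)\bigr),\qquad K=\lfloor x\sqrt n\rfloor.
\end{equation*}
Under the substitution $k=s\sqrt n$, the powers of $n$ carried by $\coresum(k)$, by $g_n(k)$, and by the finite difference $g_n(k)-g_n(k+1)\sim \beta c_0\, n^{-1/2-(t-1)/4}e^{-\beta s}$ balance exactly against $dk=\sqrt n\,ds$, yielding the finite limit
\begin{equation*}
F_n(x)\;\longrightarrow\;C c_0\Bigl(x^{(t-1)/2}e^{-\beta x}+\beta\int_0^x s^{(t-1)/2}e^{-\beta s}\,ds\Bigr).
\end{equation*}
Differentiating in $x$ collapses this to $Cc_0\cdot\tfrac{t-1}{2}\,x^{(t-3)/2}e^{-\beta x}$; a short bookkeeping check using $\Gamma(\alpha+1)=\alpha\Gamma(\alpha)$ verifies the identity $Cc_0\cdot\tfrac{t-1}{2}=\beta^\alpha/\Gamma(\alpha)$, which identifies the right-hand side as the gamma CDF with parameters $(\alpha,\beta)$.

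\textbf{Step 3: moments and the main obstacle.} The same Abel-summation argument applied to the tail, together with the fact that the bound $g_n(k)\lesssim n^{-(t-1)/4}e^{-\beta k/\sqrt n}$ persists up to a fixed fraction of $n$ (and is superseded by a cruder exponential estimate beyond), yields $P(Y_n/\sqrt n>M)=O\bigl(M^{(t-1)/2}e^{-\beta M}\bigr)$ uniformly in $n$. This forces uniform integrability of $(Y_n/\sqrt n)^r$ for every $r\ge 0$, upgrading weak convergence to convergence of all moments $E[Y^r]=\Gamma(\alpha+r)/(\beta^r\Gamma(\alpha))$. The main technical obstacle lies squarely in Step~2: the error $O(n^{(t-2)/2})$ from Theorem~\ref{thm:sum} must be shown to be negligible against the main term uniformly for $k\in[0,x\sqrt n]$, and the smooth asymptotic for $g_n$ must hold uniformly with controlled finite-difference estimates across this window of length $\sqrt n$. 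Matching these two error scales, and handling the transition from the bulk $k=O(\sqrt n)$ to the eventually negligible tail $k\in[x\sqrt n,n]$, is where the delicate work concentrates.
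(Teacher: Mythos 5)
Your strategy is sound and the constants check out ($C c_0 \cdot \frac{t-1}{2} = \beta^{\alpha}/\Gamma(\alpha)$ is indeed an identity with $c_0 = t^{(t+2)/2}2^{-3(t-1)/4}3^{-(t-1)/4}$), but your logical architecture is genuinely different from the paper's. The paper proves convergence of all moments first --- writing $\mathbb{E}(Y_n/\sqrt{n})^k$ as $\int_0^\infty \psi_n(y)g_{n,k}(y)\,dy$, replacing $g_{n,k}$ by its limit $g_k$, and integrating by parts so that $\int_0^y \psi_n$ becomes $\coresum(\ell_n(y))/n^{(t-1)/4}$ --- and then deduces weak convergence from the fact that the gamma distribution is determined by its moments. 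You instead prove CDF convergence directly via Abel summation (the discrete twin of that integration by parts) and then upgrade to moments via a tail bound and uniform integrability. The shared engine is the same: only partial sums of $\numcore$ are controlled (Theorem~\ref{thm:sum}), so one must sum or integrate by parts against $\coresum$. Your route has a real advantage: because $\coresum$ enters immediately, you never need pointwise upper bounds on $\numcore$ itself, whereas the paper's Lemma~\ref{lem:conv_of_integal} requires such bounds from external sources (Anderson's theorem for $t\ge 4$, class-number formulas for $t=3$, and a separate theta-function argument for $t=2$). The price you pay is exactly where you locate it: you need uniform \emph{finite-difference} estimates on $g_n(k)=d_t(n-k)/p(n)$ across the whole window and a tail estimate valid up to $k$ of order $n$, which is where the paper's three-case analysis (Lemma~\ref{lem:gn-bound}) reappears in your argument. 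One bookkeeping error to fix: since $d_t(n-k)=0$ unless $k\equiv n \pmod{t}$ and $\coresum(k)=\sum_{i}\numcore(k-it)$ is a partial sum along a single residue class, your summation by parts must run over the arithmetic progression $k\equiv n\pmod t$ with increments of $t$ (so that $\coresum(k)-\coresum(k-t)=\numcore(k)$); as written, with unit increments, $\coresum(k)-\coresum(k-1)$ is not $\numcore(k)$ and the identity in Step~2 fails. This is repairable and matches the paper's reindexing $k=jt+\epsilon(n)$ via $\ell_n(y)$.
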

	
	\begin{center}
		\begin{figure}[htbp!]
			\begin{tabular}{c c}
				\includegraphics[scale=0.29]{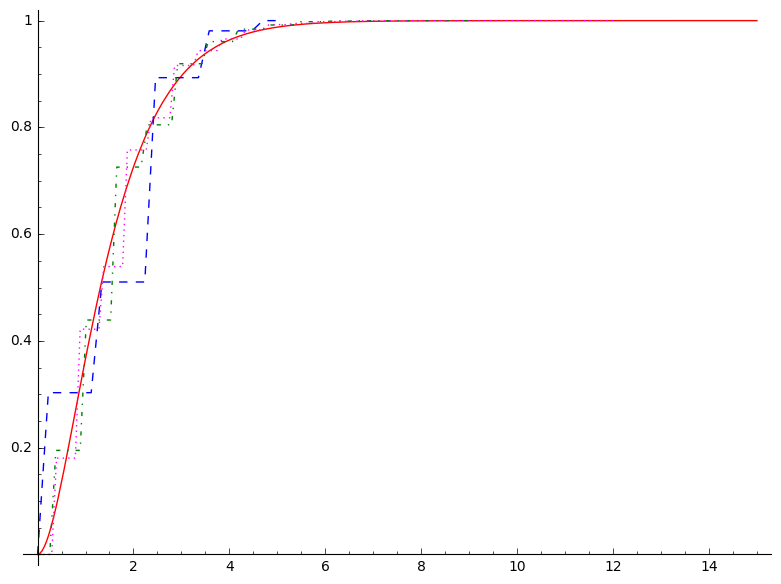} &
				\includegraphics[scale=0.29]{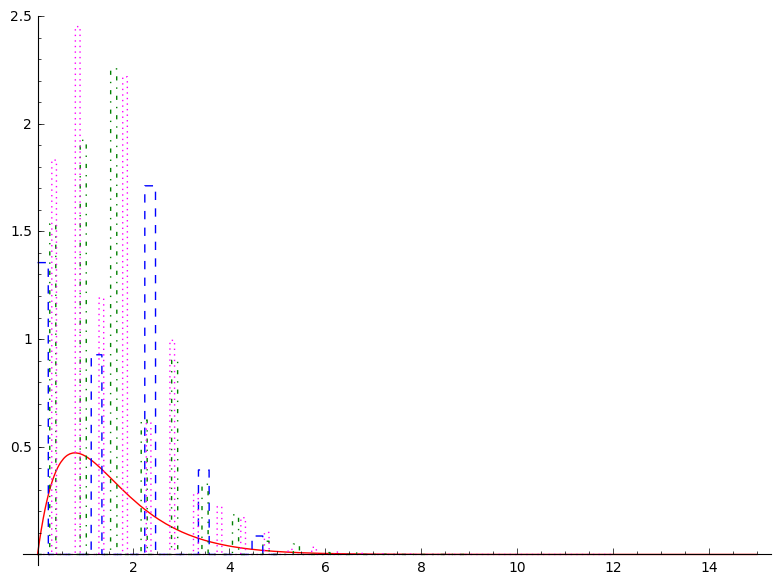} \\
				(a) & (b)
			\end{tabular}
			
			\caption{Comparison of the limiting CDFs and densities for small values of $n$ with $t=5$. A red solid line is used for the limiting distribution, dashed blue for $n = 20$, dash-dotted green for $n = 62$ and dotted magenta for $n=103$.
				In (a) the CDFs, and in (b) the density of $Y$, 
				and the PMFs of these $Y_n/\sqrt{n}$'s as a bar chart with width $1/\sqrt{n}$.}
			\label{fig:dist-core5}
			
		\end{figure}
	\end{center}
	
	See Figure~\ref{fig:dist-core5} for an illustration of Theorem~\ref{thm:conv} for $t=5$. Notice that while the distribution seems to converge pointwise in Figure~\ref{fig:dist-core5}(a), the density in Figure~\ref{fig:dist-core5}(b) does not.
	An immediate consequence of Theorem~\ref{thm:conv} is the following result.
	
	\begin{cor}
		\label{cor:expectation}
		The expectation of the size of $t$-core for a uniformly random partition of size $n$ is asymptotic to $(t-1)\sqrt{6n}/2\pi$.
	\end{cor}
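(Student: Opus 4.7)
The plan is to extract the statement directly from Theorem~\ref{thm:conv}, which asserts not merely weak convergence but convergence \emph{in moments} of $Y_n/\sqrt{n}$ to the limiting gamma variable $Y$. In particular, taking the first moment gives $\mathbb{E}[Y_n]/\sqrt{n} \to \mathbb{E}[Y]$, so the corollary reduces to computing $\mathbb{E}[Y]$ for the gamma distribution with shape $\alpha=(t-1)/2$ and rate $\beta=\pi/\sqrt{6}$ and then multiplying by $\sqrt{n}$.

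A short calculation from the density in~\eqref{gamma-density} gives
\[
\mathbb{E}[Y] \;=\; \frac{\beta^\alpha}{\Gamma(\alpha)} \int_0^\infty x^{\alpha} e^{-\beta x}\, dx \;=\; \frac{\Gamma(\alpha+1)}{\beta\,\Gamma(\alpha)} \;=\; \frac{\alpha}{\beta}.
\]
Substituting $\alpha=(t-1)/2$ and $\beta=\pi/\sqrt{6}$ yields $\mathbb{E}[Y] = (t-1)\sqrt{6}/(2\pi)$, and therefore
\[
\mathbb{E}[Y_n] \;\sim\; \sqrt{n}\cdot \frac{(t-1)\sqrt{6}}{2\pi} \;=\; \frac{(t-1)\sqrt{6n}}{2\pi},
\]
as claimed. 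The only point one has to be careful about is that weak convergence alone would require a separate uniform integrability argument in order to pass to expectations; however, Theorem~\ref{thm:conv} already takes care of this by including convergence in moments in its conclusion, so here there is essentially no obstacle beyond evaluating a standard integral.
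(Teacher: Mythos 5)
Your proposal is correct and matches the paper's own proof: both invoke the convergence in moments from Theorem~\ref{thm:conv} for $k=1$ and then evaluate $\mathbb{E}Y=\alpha/\beta$ for the limiting gamma distribution (the paper reads this off from \eqref{gamma-moments}, while you compute the same integral directly). Your closing remark about why moment convergence, rather than mere weak convergence, is what licenses passing to expectations is a nice touch but the argument is otherwise identical.
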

	
	We illustrate Corollary~\ref{cor:expectation} with the example of $t=3$ in Figure~\ref{fig:avg-core3}.
	Theorem~\ref{thm:conv} and Corollary~\ref{cor:expectation} will be proved in Section~\ref{sec:distrib}.
	
	\begin{center}
		\begin{figure}[htbp!]
			\includegraphics[scale=0.5]{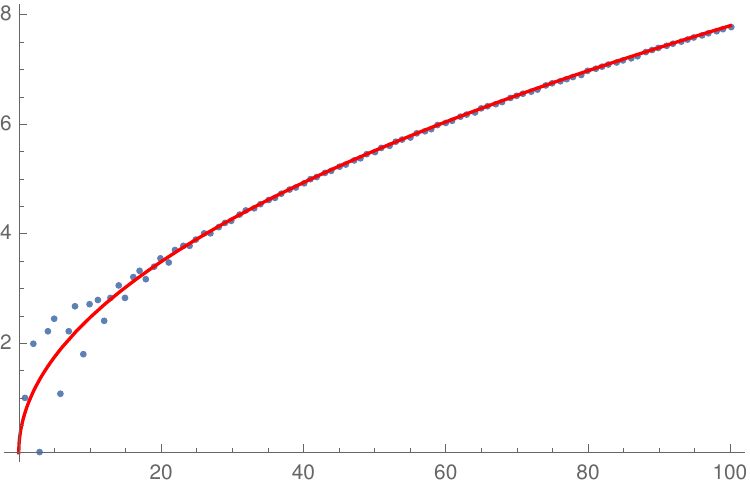}
			\caption{The average size of the $3$-core for partitions of size 1 to 100 in blue circles, along with the result from Corollary~\ref{cor:expectation}, $\sqrt{6x}/\pi$, as a red line.}
			\label{fig:avg-core3}
		\end{figure}
	\end{center}
	
	Let $\lambda$ be a partition of $n$ and $c$ denote a cell in the Young's diagram of $\lambda$. Then we define $h_c$ be the hook length associated to the cell $c$. See Section~\ref{sec:basics} for the precise definitions.
	Our final major result is a statement about the remainder of hook lengths of cells when divided by $t$. 
	
	\begin{theorem}
		\label{thm:random hook length}
		For a uniformly random cell $c$ of a uniformly random partition $\lambda $ of $n$, the probability that the hook length of $c$ in $\lambda$ is congruent to $i$ modulo $t$ is asymptotic to $1/t$ for any $i\in \{0,1,\dots, t-1 \}$.
	\end{theorem}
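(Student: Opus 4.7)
The plan is to split the theorem into two cases: $i=0$, which follows quickly from the earlier results, and $i\neq 0$, which will require the abacus machinery announced in the introduction.

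For $i=0$, I would invoke the classical correspondence between cells of $\lambda$ whose hook length is divisible by $t$ and cells of the components of the $t$-quotient. This yields
\[
N_{t,0}(\lambda) \;:=\; \#\{c\in\lambda : t\mid h_c\} \;=\; \sum_{r=0}^{t-1}|\lambda^{(r)}| \;=\; \frac{|\lambda|-|\core_t(\lambda)|}{t}.
\]
Averaging over a uniformly random $\lambda\vdash n$ and applying Corollary~\ref{cor:expectation}, which gives $\mathbb{E}[|\core_t(\lambda)|]=O(\sqrt n)$, one obtains
\[
\Pr(t\mid h_c) \;=\; \mathbb{E}\!\left[\frac{N_{t,0}(\lambda)}{n}\right] \;=\; \frac{1}{t}-\frac{\mathbb{E}[|\core_t(\lambda)|]}{tn} \;=\; \frac{1}{t}+O\!\left(\tfrac{1}{\sqrt n}\right).
\]

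For $i\neq 0$, the plan is to pass to the $t$-abacus. Cells of $\lambda$ correspond bijectively to pairs $(b,g)$ where $b$ is a bead position, $g<b$ is a gap position, and the corresponding hook length equals $b-g$. Because $b-g\equiv\rho(b)-\rho(g)\pmod t$, where $\rho(\cdot)$ denotes the runner, the cells contributing to $N_{t,i}(\lambda)$ split according to the ordered pair of runners $(r_1,r_2)$ carrying $(b,g)$, subject to $r_1-r_2\equiv i\pmod t$. For each such pair of runners, the number of admissible $(b,g)$ can be written as an explicit combinatorial sum over the bead configurations of the two runners, which in turn are encoded by the quotient components $\lambda^{(r_1)},\lambda^{(r_2)}$ together with the abacus charges determined by $\core_t(\lambda)$.

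The main obstacle will be carrying out the averaging over uniformly random $\lambda\vdash n$. Using the bijection $\lambda\leftrightarrow(\core_t(\lambda),\lambda^{(0)},\ldots,\lambda^{(t-1)})$, I would parametrize partitions of $n$ with $t$-core of size $k$ by $t$-tuples summing to $(n-k)/t$, enumerated by the generating function $\prod_{j\geq 1}(1-q^j)^{-t}$. For fixed $i\neq 0$ there are exactly $t$ ordered runner pairs with $r_1-r_2\equiv i\pmod t$, so it suffices to show that the expected cross-runner bead-gap count for a single pair is asymptotic to $n/t^2$. This should follow from a saddle-point analysis in the spirit of Meinardus, exploiting the exact permutation symmetry among runners when the core is empty and controlling the symmetry-breaking contribution of the charges via Corollary~\ref{cor:expectation}, whose control on $|\core_t(\lambda)|$ guarantees that the charges are $o(\sqrt n)$ on average while the typical quotient component has size $\Theta(n)$. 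Summing the $t$ pairs then yields $\mathbb{E}[N_{t,i}(\lambda)]\sim n/t$, and combining with Part~1 gives the uniform limit $1/t$ for every $i\in\{0,1,\ldots,t-1\}$.
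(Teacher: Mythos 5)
Your treatment of the case $i=0$ is correct and is essentially the paper's Proposition~\ref{prop:mod_zero_case}: the identity $\#\{c : t\mid h_c\} = (n-|\core_t(\lambda)|)/t$ plus Corollary~\ref{cor:expectation} gives $1/t+O(n^{-1/2})$. For $i\neq 0$ you have also correctly identified the two driving ideas — permutation symmetry among the runners of the $t$-quotient, and control of the symmetry-breaking via the smallness of the core — but the step that actually carries the proof is asserted rather than proved. You reduce everything to the claim that the expected cross-runner bead--gap count for a fixed ordered pair of runners is $\sim n/t^2$, and you propose to establish this by ``a saddle-point analysis in the spirit of Meinardus.'' No such analysis is needed if the symmetry is set up correctly, and it is far from clear it would succeed as stated: you would need asymptotics of a bivariate generating function tracking both $|\lambda|$ and the cross-runner count, which you have not exhibited. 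The paper instead makes the symmetry \emph{exact} on a large subset of cells and never estimates the counts themselves.

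The two concrete obstructions your sketch does not address are the following. First, even when the core is empty, permuting the runners does \emph{not} permute the gap--bead pairs: a pair lying in the same column of the $t$-runner abacus (hook length $<t$) may be created or destroyed by the permutation, so the counts $N_{t,i}(\lambda)$ for distinct nonzero $i$ are not exactly exchanged. Second, when the core is nonempty the runners are shifted relative to one another by the justification positions $(p_0,\dots,p_{t-1})$, and the symmetry only survives for pairs at least $b_\lambda=\max_{i,j}|p_i-p_j|$ columns apart. The paper's $b$-smoothing $C_\nu^b$ and the injection $\phi$ of Proposition~\ref{prop:injection_of_cells} exist precisely to excise these problematic cells, and the quantitative heart of the argument is the bound of Lemma~\ref{lem:small-hooks} that a partition of $n$ has at most $m\sqrt{2n}$ cells of hook length $<m$, combined with $b_\lambda\le 2\sqrt{|\rho|}$ and $\mathbb{E}\sqrt{|\rho|}\le\sqrt{\mathbb{E}|\rho|}=O(n^{1/4})$, yielding an $O(n^{-1/4})$ error. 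Your remark that the charges are $o(\sqrt n)$ on average does not by itself bound the \emph{number of cells} affected by the shifts; without an analogue of Lemma~\ref{lem:small-hooks} the argument does not close. So the proposal is a plausible outline of the right strategy, but the case $i\neq 0$ has a genuine gap at its central step.
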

	
	When $i=0$, this will follow as a corollary of Theorem~\ref{thm:conv}. 
	Theorem~\ref{thm:random hook length} will be proved in Section~\ref{sec:hooks}.
	
	We begin with preliminaries in Section~\ref{sec:prelim}. We first recall the basics of partitions in Section~\ref{sec:basics}. The abacus representation for partitions is given in Section~\ref{sec:abacus} and basic definitions and results for cores and quotients of partitions are recalled in Section~\ref{sec:core-quo}. 
	
	{\bf Note added in proof}: After this work was completed, we were made aware of gem of a paper of Lulov and Pittel~\cite{lulov-pittel-1999} which seems to have been completely missed by almost everyone. Several results in the literature have rederived or improved the results here. For example, 
	Theorem 2 of Anderson~\cite{anderson-2008} is entirely subsumed by Lemma 5(a) of this paper. Further, Section 5 and especially Theorem 3 of this paper deals with the shape of large random cores and proves a more general result than Proposition 4 of \cite{lam-2015} (which rests on Conjecture 2 therein, now proved in \cite{ayyer-linusson-2016}). Neither author seems aware of this paper.
	
	The Lulov--Pittel paper includes many of our results, but with different proofs. Our Theorem~\ref{thm:sum} is similar in spirit to their Lemma 5(b). In fact, their error estimates are better and we believe our results can follow from theirs.
	However, our proofs are more elementary and geometric in nature, whereas theirs is more technical and number-theoretic. 
	Our Theorem~\ref{thm:conv} is identical to their Theorem 2. However, our Theorem~\ref{thm:random hook length} and all the results in Section~\ref{sec:hooks} are completely new.
	
	See also a recent nice preprint of Rostam~\cite{rostam-2021} for analogous results for the Plancherel measure on partitions.

	\section{Preliminaries}
	\label{sec:prelim}
	
	\subsection{Basics}
	\label{sec:basics}
	
	Recall that an {\em (integer) partition} $\lambda$ of a nonnegative integer $n$ is a nonincreasing tuple of positive integers which sum up to $n$.
	If $\lambda$ is a partition of $n$, we write $\lambda \vdash n$ and say that the {\em size} of $\lambda$, denoted $|\lambda|$, is $n$. Let $p(n)$ denote the number of partitions of $n$. It is easy to see that the ordinary generating function of $p(n)$ is given by
	\begin{equation}
		\label{partn-gf}
		\sum_{n \geq 0} p(n) x^n = \prod_{j \geq 1} \frac{1}{1-x^j}.
	\end{equation}
	Partitions also arise in a natural way in the representation theory of the symmetric groups. The irreducible representations of $S_n$ are indexed by partitions of $n$~\cite{james-kerber-2009}. We write $\Par$ for the set of all partitions.
	It is well-known that the asymptotics of $p(n)$ is given by~\cite[Equation~(5.1.2)]{andrews-1976}
	\begin{equation}
		\label{asy_p}
		p(n)\sim\frac{\exp \left(\pi \sqrt{\frac{2n}{3}} \right)}{4n\sqrt{3}}.
	\end{equation}
	
	A partition $\lambda = (\lambda_1,\dots,\lambda_k)$ has a graphical representation as a {\em Young diagram} or {\em shape}, in which we place $\lambda_i$ left-justified boxes in the $i$'th row, for $1 \leq i \leq k$. For example, 
	$(5,4,4,2,1)$ is a partition of $16$ whose Young diagram (in English notation) is
	\begin{equation}
		\label{eg-ydiag}
		\ydiagram{5,4,4,2,1}.
	\end{equation}
	The {\em conjugate partition} of $\lambda$, denoted $\lambda'$ is a partition of the same size whose Young diagram is obtained by taking the transpose of the Young diagram of $\lambda$. In the above example, $(5,4,4,2,1)' = (5,4,3,3,1)$.
	A {\em standard Young tableau} (SYT) is a filling of a Young diagram of $\lambda \vdash n$ with entries in $\{1, \dots, n\}$ such that the entries strictly increase as we read from left to right and from top to bottom. 
	Let $c \equiv (i,j)$ be a cell in the Young diagram of $\lambda$.
	The {\em hook} of $c$ is a subset of the cells in the Young diagram containing the cells to the right of $c$ in the same row, those below $c$ in the same column, and $c$ itself. 
	The {\em hook length} is the number of the cells in the hook of $c$ and is given by $h^\lambda_c \equiv h_c = \lambda_i - i + \lambda'_j - j +1$. 
	The part $\lambda_i - i$ is known as the {\em arm length} and $\lambda'_j - j$ is known as the {\em leg length}.
	The rightmost (resp. bottommost) cell in the hook of $c$ is called the {\em arm node} (resp. {\em leg node}) of $c$. The {\em max-hook} is the hook of the cell $(1,1)$. The {\em content} of $c$ is $j-i$.
	The hook numbers and contents of cells in the previous example of $(5,4,4,2,1)$ are
	\begin{equation}
		\label{eg-hooks-contents}
		\ytableaushort{97541,7532,6421,31,1}
		\quad
		\raisebox{-1.1cm}{\text{and}}
		\quad
		\ytableaushort{01234,{-1}012,{-2}{-1}01,{-3}{-2},{-4}}
		\quad
		\raisebox{-1.1cm}{\text{respectively.}}
	\end{equation}
	
	\subsection{Abacus representation}
	\label{sec:abacus}
	
	\begin{defn}
		\label{def:abacus}
		An {\em abacus} or {\em (1-runner)} is a function $w : \mathbb{Z} \to \{0,1\}$ such that there exist $m,n \in \mathbb{Z}$ such that $w_i =1$ (resp. $w_i =0$) for all $i \leq m$ (resp. $i \geq n$).
	\end{defn}
	
	Starting from an abacus $w$, consider the up-right path formed by replacing $1$'s by vertical steps and $0$'s by horizontal steps. This path will form the outer boundary of a partition. For example, the abacus
	\begin{equation}
		\label{eg-abacus}
		(\dots, 1,1,1,0,1,0,1,0,\underline{0},1,1,0,1,0,0,0,\dots)
	\end{equation}
	maps to the partition in \eqref{eg-ydiag}, where the bit at position $0$ is marked by underlining it. However, note that this is not a bijective correspondence because any shift of the abacus will lead to the same partition. An abacus $w$ is called {\em justified at position $p$} if $w_i = 1$ (resp. $w_i = 0$) for $i < p$ (resp. $i \geq p$). An abacus is called {\em justified} if it is justified at position $p$ for some $p$. 
	Any abacus can be transformed to a justified one by moving the $1$'s to the left past the $0$'s starting from the leftmost movable $1$. 
	An abacus is called {\em balanced} if, after this transformation, it is justified at position $0$. The abacus in \eqref{eg-abacus} is balanced.
	Note that balanced abaci are in bijection with partitions.
	
	We now summarize properties of abaci that will be relevant to us. Readers interested in the details can look at~\cite{james-kerber-2009,loehr-2011,olsson}. 
	
	\begin{prop}
		\label{prop:core-facts}
		Let $\lambda$ be a partition with corresponding balanced abacus $w$ and $c$ be a cell in the Young diagram of $\lambda$.
		Then the following properties hold.
		\begin{enumerate}
			
			\item Cells in the Young diagram of $\lambda$ are in bijection with pairs $(i,j)$ such that $i < j$, $w_i = 0$ and $w_j = 1$.
			
			\item The hook length of the cell $c$ corresponding to the pair $(i,j)$ in $w$ is $j-i$. The arm length (resp. leg length) of $c$ is the number of $0$'s (resp. $1$'s) between $i$ and $j$ in $w$ (excluding these).
			
			\item Suppose $c$ has hook length $t$. Then $c$ corresponds to a pair $(i,i+t)$ in $w$. Then, removing a $t$-rim hook for the cell $c$ (see the beginning of Section~\ref{sec:core-quo}) from the Young diagram of $\lambda$ amounts to exchanging the $i$'th and $(i+t)$'th entries in $w$.
			
		\end{enumerate}
		
	\end{prop}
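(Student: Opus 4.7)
The plan is to prove all three parts by establishing a concrete correspondence between the balanced abacus $w$ and the southwest-to-northeast boundary path of the Young diagram of $\lambda$. The key idea is that, once the balancing condition pins down which bit lies at position $0$, one identifies position $i$ of the abacus with the $i$-th step of the boundary path, with $w_i = 0$ meaning an east step and $w_i = 1$ meaning a north step. I would first verify, by induction on $|\lambda|$ or by direct inspection of where the main diagonal meets the boundary, that this gives a well-defined bijection between balanced abaci and partitions, consistent with the example in~\eqref{eg-abacus}.

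For part (1), each cell $(r,c)$ of $\lambda$ is uniquely determined by two boundary edges: the horizontal base edge of column $c$, which is an east step (hence a $0$) at some position $i$, and the vertical right edge of row $r$, which is a north step (hence a $1$) at some position $j$. Because the cell sits above the first edge and to the left of the second, the east step precedes the north step along the path, so $i < j$. Conversely, any pair $(i,j)$ with $i < j$, $w_i = 0$ and $w_j = 1$ determines a unique cell, namely the one whose column ends at the east step at position $i$ and whose row ends at the north step at position $j$.

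For part (2), I would trace the boundary path between positions $i$ and $j$. Each $0$ strictly between them is the base edge of a column to the right of $c$'s column but not past the arm node, hence matches exactly one arm cell of $c$; similarly, each $1$ strictly between them is the right edge of a row below $c$'s row but not past the leg node, hence matches exactly one leg cell. Thus the number of $0$'s between $i$ and $j$ equals the arm length and the number of $1$'s equals the leg length, and summing these with the cell $c$ itself gives $j - i$ as the hook length.

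For part (3), the main task is to tie rim-hook removal to the bit swap. I would observe that the portion of the boundary path from position $i$ to position $i+t$ traces the outer boundary of the $t$-rim hook of $c$, and that collapsing this rim hook replaces that segment by one in which the $0$ at position $i$ and the $1$ at position $i+t$ are exchanged, with the bits strictly between them unchanged (they record the inner boundary of the rim hook, which persists). I would check that the resulting abacus is again balanced and corresponds to the partition obtained by removing the rim hook by comparing sizes ($|\lambda|$ drops by exactly $t = j-i$, matching the change in the cell-pair count from part (1)). The principal obstacle throughout is bookkeeping: the orientation of the path, the identification of which bit encodes which step, and the reference position for balancing must all be fixed consistently at the outset, after which all three parts reduce to a direct chase along the boundary path.
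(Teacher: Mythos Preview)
The paper does not actually prove this proposition: it is stated as a summary of standard abacus facts, with the reader directed to \cite{james-kerber-2009,loehr-2011,olsson} for details. Your boundary-path argument is precisely the standard proof one finds in those references (Loehr in particular), so there is nothing to compare against and your proposal is correct as written.
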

	
	\subsection{Cores and quotients}
	\label{sec:core-quo}
	
	To describe cores and quotients, we will need some notation. Associated to a partition $\lambda$ and a cell $c$ in its Young diagram, the set of cells joining the two corners in the hook of $c$ along the boundary is known as the {\em rim hook} or {\em ribbon} of $c$. In the running example of \eqref{eg-ydiag} below,
	\[
	\ytableaushort{{}{}{}{}{},c{}{}\cdot,
		{}\cdot\cdot\cdot,\cdot\cdot,\cdot},
	\]
	the rim hook of the cell marked $c$ are the cells marked by $\cdot$'s. Note that the number of cells in the rim hook of $c$ is given by $h_c = 7$. It is easy to see that the cardinality of the rim hook of any cell is also the hook number of that cell.
	
	The {\em $t$-core of a partition} $\lambda$, denoted $\core_t(\lambda)$, is the partition obtained by removing as many rim hooks of size $t$ from $\lambda$ as possible. A nontrivial fact (see \cite{olsson}, for example) is that the resulting partition is unique. Although this is not obvious in the Young diagram picture, this is easy to see in the abacus representation. By Proposition~\ref{prop:core-facts}(3), one can exchange $(0,1)$ pairs at positions $(i,i+t)$ for each class modulo $t$ independently.
	Going back to the running example in \eqref{eg-ydiag}, $\core_3((5,4,4,2,1)) = (2,1,1)$. And from the corresponding abacus in \eqref{eg-abacus}, the $3$-core is the corresponding balanced abacus,
	\[
	(\dots, 1,1,1,1,1,0,1,1,\underline{0},1,0,0,0,0,0,0,\dots).
	\]
	
	A partition $\lambda$ is called a {\em $t$-core} if $\core_t(\lambda) = \lambda$, or equivalently, if none of the hook numbers in the Young diagram of $\lambda$ is equal to $t$.\footnote{It is slightly more nontrivial to prove that $\lambda$ is a $t$-core if none of the hook numbers are {\em divisible} by $t$, but this can be seen using abaci; see~\cite[Section I.3]{olsson}.}
	From Proposition~\ref{prop:core-facts}(3), it follows that the partition $\lambda$ is a $t$-core if and only if there is no $i \in \mathbb{Z}$ such that $w_i = 0$ and $w_{i+t} = 1$.
	The partition $(5,4,4,2,1)$ is not a $t$-core for any $2 \leq t \leq 7$, but it is an $8$-core. Let $\Core_t$ be set of all $t$-cores.
	Let $\numcore(n)$ be the number of $t$-cores of size $n$.
	Then it is known~\cite{klyachko-1982} that
	\begin{equation}
		\label{coret-gf}
		\sum_{n=0}^\infty \numcore(n)x^n = \prod_{k=1}^\infty 
		\frac{(1-x^{tk})^t}{1-x^{k}}.
	\end{equation}
	Note that the only $2$-cores are staircase partitions $(m,m-1,\dots,1)$, and therefore
	\begin{equation}
		c_2(n) = 
		\begin{cases}
			1 & n = k(k-1)/2 \text{ for some $k$}, \\
			0 & \text{otherwise}.
		\end{cases}
	\end{equation}

	\begin{defn}
		\label{def:t-divisible}
		A partition is said to be {\em $t$-divisible} if it has empty $t$-core.
	\end{defn}
	
	Let $\mathcal{D}_t(n)$ be the set of $t$-divisible partitions of $n$ and 
	$d_t(n)$ be its cardinality.
	By a result of Garvan, Kim and Stanton~\cite{garvan-kim-stanton-1990}, the generating function of $d_t(n)$ is given by
	\begin{equation}
		\label{emptycore-gf}
		\sum_{n=0}^\infty d_t(n)x^n = \prod_{k=1}^\infty \frac{1}{(1-x^{tk})^t}.
	\end{equation}
	
	The {\em $t$-quotient} of $\lambda$ is a $t$-tuple of partitions $(\lambda^0, \dots, \lambda^{t-1})$ described as follows. For each $k$ between $0$ and $t-1$, $\lambda^k$ consists of the cells of $\lambda$ whose hook lengths are divisible by $t$ and whose arm nodes have content congruent to $k$ modulo $t$.
	It is a nontrivial fact that this subdiagram is itself a Young diagram.
	
	Quotients are also naturally understood in terms of abaci.
	Given a partition $\lambda$ with abacus $w$, construct $t$ abaci by letting $\lambda^{i} = (w_{n t + i})_{n \in \mathbb{Z}}$ for $0 \leq i \leq t-1$. The {\em $t$-runner abacus} of $\lambda$ is then $(\lambda^0,\dots,\lambda^{t-1})$ of $1$-runner abaci, where the zeroth position in $\lambda^0$ is underlined. Notice that we have used the same notation for the $i$'th entry of the $t$-quotient and the $t$-runner. From the context, the usage should be clear.
	This is because the $i$'th runner in the $t$-abacus, when interpreted as a partition is exactly the $i$'th entry of the $t$-quotient.
	
	Going back to our running example in \eqref{eg-ydiag}, the $3$-quotient of $(5,4,4,\allowbreak 2,1)$ is seen to be $(\emptyset,(1,1,1),(1))$ by looking at its hooks and contents in \eqref{eg-hooks-contents}.
	The corresponding $3$-runner abacus is 
	\[
	\begin{matrix}
		\lambda^0 = & (\dots, & 1, & 1, & 0, & \underline{0}, & 0, & 0, & \dots), \\
		\lambda^1 = & (\dots, & 1, & 0, & 1, & 1, & 1, & 0, & \dots), \\
		\lambda^2 = & (\dots, & 1, & 1, & 0, & 1, & 0, & 0, & \dots). \\
	\end{matrix}
	\]
	Notice that not all $i$'th runners are balanced. 
	It turns out that $t$-cores and $t$-divisible partitions have a natural interpretation in terms of $t$-runner abaci. 
	
	\begin{prop}
		\label{prop:t-runner}
		Let $\lambda$ be a partition and $(\lambda^0,\dots,\lambda^{t-1})$ be its $t$-runner abacus. Then
		\begin{enumerate}
			\item $\lambda$ is a $t$-core if and only if $\lambda^i$ is justified for $0 \leq i \leq t-1$ and therefore its positions of justification $(p_0,\dots,p_{t-1})$ sum to zero, and 
			
			\item $\lambda$ is $t$-divisible if and only if $\lambda^i$ is balanced for $0 \leq i \leq t-1$.
		\end{enumerate}
	\end{prop}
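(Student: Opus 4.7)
The plan is to reduce both parts of the proposition to statements about each individual runner, exploiting the key observation (implicit in Proposition~\ref{prop:core-facts}(3)) that swapping the entries at positions $k$ and $k+t$ of the abacus $w$ of $\lambda$ is the same as swapping the adjacent entries $\lambda^r_n$ and $\lambda^r_{n+1}$ of the single runner indexed by $r$, where $k = nt+r$ with $0 \leq r \leq t-1$. Each runner inherits from $w$ the boundary behaviour of being eventually $1$ far to the left and eventually $0$ far to the right, so the only way for a runner to contain no adjacent $(0,1)$ pattern is for it to be justified at some integer $p_r$.

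For part (1), I would combine this observation with Proposition~\ref{prop:core-facts}(3): $\lambda$ is a $t$-core iff there is no $i$ satisfying $w_i = 0$ and $w_{i+t} = 1$, iff no runner $\lambda^r$ contains an adjacent $0,1$ pattern, iff every $\lambda^r$ is justified at some $p_r$. To deduce $\sum_r p_r = 0$, I would split the balanced condition on $w$, namely $\#\{k<0 : w_k = 0\} = \#\{k \geq 0 : w_k = 1\}$, by residue class modulo $t$. Since $0 \leq r \leq t-1$ forces $nt+r \geq 0 \Leftrightarrow n \geq 0$, the split decouples cleanly, and using the justified form $\lambda^r_n = 1 \Leftrightarrow n < p_r$ the two sides become $\sum_r \max(-p_r, 0)$ and $\sum_r \max(p_r, 0)$ respectively, which are equal iff $\sum_r p_r = 0$.

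For part (2), removing $t$-rim hooks corresponds via Proposition~\ref{prop:core-facts}(3) to performing adjacent $(0,1) \to (1,0)$ sorts inside individual runners. Hence the $t$-runner abacus of $\core_t(\lambda)$ is obtained by fully sorting each $\lambda^r$ into its justified form at some position $p_r$, and this $p_r$ depends on $\lambda^r$ alone, namely it equals the ``charge'' $\#\{n \geq 0 : \lambda^r_n = 1\} - \#\{n < 0 : \lambda^r_n = 0\}$, which is readily checked to be invariant under adjacent swaps. Since the empty partition corresponds to the balanced abacus justified at $0$, whose $t$-runner decomposition has every runner justified at $0$, I conclude that $\lambda$ is $t$-divisible iff every $\lambda^r$ has charge $0$, iff every $\lambda^r$ is balanced. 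The step most likely to need care is the index bookkeeping in the charge/balance calculation of part (1); happily, because $0 \leq r \leq t-1$ makes the division ``$nt+r \geq 0$'' coincide with ``$n \geq 0$'' for every residue $r$, the sums decouple across residue classes without any case analysis.
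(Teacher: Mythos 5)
Your proof is correct. Note that the paper itself states Proposition~\ref{prop:t-runner} without proof, deferring to the standard references on abaci; your argument is a valid self-contained verification built from exactly the ingredients the paper sets up: the observation that a swap of $w_k$ and $w_{k+t}$ is an adjacent swap within the runner of residue $k \bmod t$, the equivalence (via Proposition~\ref{prop:core-facts}(3)) of being a $t$-core with having no $(0,1)$ pair at distance $t$, and the clean decoupling of the balance condition across residue classes because $nt+r\geq 0 \Leftrightarrow n\geq 0$ for $0\leq r\leq t-1$. The charge invariance argument for part (2), identifying the justification position of each runner of $\core_t(\lambda)$ with the charge of the corresponding runner of $\lambda$, is also sound, so there is nothing to correct.
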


	A fundamental result on cores and quotients is the {\em partition division theorem}~\cite[Theorem~11.22]{loehr-2011}, which is an analogue of the division algorithm for integers. We restate it in slightly different terminology more suited to our purposes.
	
	\begin{theorem}
		\label{thm:partn-divn}
		Let $t \geq 2$ be an integer. Then there is a natural bijection 
		\[
		\Delta_t: \Par \to \Core_t \times \mathcal{D}_t
		\]
		defined by $\Delta_t(\lambda)=(\rho, \nu )$, where $\rho$ is the $t$-core of $\lambda$ and $\nu$ is a $t$-divisible partition whose $t$-quotient is $(\nu^0,\dots,\nu^{t-1})$. Moreover,
		\[
		|\lambda| = |\rho| + |\nu| = |\rho| + t \sum_{k=0}^{t-1} |\nu^k|.
		\]
	\end{theorem}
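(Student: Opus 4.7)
The plan is to realize $\Delta_t$ as a rearrangement of the $t$-runner abacus, using Propositions~\ref{prop:core-facts} and~\ref{prop:t-runner} as the main structural inputs. The intuition is that a $t$-runner abacus encodes two pieces of data: the justification position of each runner (which records the core) and the internal $0/1$ pattern of each runner (which records the quotient).

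For the forward map, I would take $\lambda\in\Par$ with balanced abacus $w$ and form its $t$-runner abacus $(\lambda^0,\dots,\lambda^{t-1})$. Each runner $\lambda^i$, viewed as a bi-infinite eventually-constant $0/1$-sequence, has a unique justification position $p_i\in\mathbb{Z}$, and balanced-ness of $w$ forces $\sum_i p_i=0$. I would define $\rho$ by the $t$-runner abacus whose $i$-th runner is $\lambda^i$ fully justified at $p_i$: this lies in $\Core_t$ by Proposition~\ref{prop:t-runner}(1), and since the justification is achieved by a sequence of adjacent-in-runner $(0,1)$-swaps, each of which is a $t$-rim hook removal by Proposition~\ref{prop:core-facts}(3), we have $\rho=\core_t(\lambda)$. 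I would then define $\nu$ by the $t$-runner abacus whose $i$-th runner is $\lambda^i$ shifted to balance at $0$: this lies in $\mathcal{D}_t$ by Proposition~\ref{prop:t-runner}(2), and its $t$-quotient coincides with that of $\lambda$. Set $\Delta_t(\lambda)=(\rho,\nu)$. The inverse is obtained by extracting $(p_0,\dots,p_{t-1})$ from $\rho$'s runners (with $\sum p_i=0$) and the balanced runners $(\nu^0,\dots,\nu^{t-1})$ from $\nu$'s, then shifting each $\nu^i$ by $p_i$ and interleaving; the shift condition guarantees the result is a balanced abacus, and the two constructions are manifestly mutually inverse.

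For the size identity I would use Proposition~\ref{prop:core-facts}(1): $|\lambda|$ equals the number of inversions in $w$. Iterating $t$-rim hook removal from $\lambda$ to $\rho$ consists of adjacent-in-runner $(0,1)$-swaps, each stripping exactly $t$ cells (since by Proposition~\ref{prop:core-facts}(2) the hook length at such a swap is $t$). The number of swaps needed inside runner $i$ is the inversion count of the sequence $\lambda^i$, and this count is invariant under the balancing shift, so it equals $|\nu^i|$ by another application of Proposition~\ref{prop:core-facts}(1) to the balanced abacus of $\nu^i$. Summing over $i$ yields $|\lambda|=|\rho|+t\sum_k|\nu^k|$; the remaining identity $|\nu|=t\sum_k|\nu^k|$ is the special case $\rho=\emptyset$. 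The hard part will be the bookkeeping: verifying that $p_i$ is well-defined on each eventually-constant runner $\lambda^i$, that balanced-ness of $w$ is equivalent to $\sum_i p_i=0$, and that the reassembly step in the inverse map produces a balanced abacus. Once these are checked, both bijectivity and the size identity follow routinely from the abacus machinery already established.
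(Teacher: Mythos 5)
Your argument is correct and is exactly the abacus realization of $\Delta_t$ that the paper itself describes in Remark~\ref{rem:t-runner_shift}; note that the paper does not prove Theorem~\ref{thm:partn-divn} at all but cites it from Loehr, so your write-up simply supplies the standard proof along the indicated lines. The bookkeeping points you flag (well-definedness of the $p_i$, balancedness being equivalent to $\sum_i p_i=0$ via additivity of charge over runners, and invariance of the inversion count under shifts) all check out against Propositions~\ref{prop:core-facts} and~\ref{prop:t-runner}.
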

	
	\begin{rem}\label{rem:t-runner_shift}
		The bijection $\Delta_t$ is best understood in terms of $t$-runner abacus. Given a partition $\lambda$, let $(p_0,\dots p_{t-1})$ be the positions of justification of $\core_t(\lambda)=\rho$ described in Proposition \ref{prop:t-runner}. Let $\delta$ denote the left shift on $1$-runner abaci, i.e., 
		$\delta(\dots,w_{-1}, w_0, w_1, \dots) = (\dots, w_0, w_1, w_2, \dots)$. Then the $t$-divisible partition $\nu$, obtained by $\Delta_t$, is the partition corresponding to the $t$-runner \[(\nu^0,\dots,\nu^{t-1})=(\delta^{p_0}\lambda^0,\dots , \delta^{p_{t-1}}\lambda^{t-1}),\] where $(\lambda^0,\dots,\lambda^{t-1})$ is the $t$-runner abacus of $\lambda$.
	\end{rem}
	
	The following corollary is then immediate.
	
	\begin{cor}
		\label{cor:number-fixed-cores}
		Let $i,n$ be positive integers such that $i < n$. Then 
		\[
		\#\{\lambda\vdash n \mid |\core_t(\lambda)|=i \} = d_t(n-i)\numcore(i).
		\]
	\end{cor}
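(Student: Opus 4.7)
The plan is to derive this counting identity as a direct application of the partition division theorem (Theorem~\ref{thm:partn-divn}). Since the bijection $\Delta_t$ encodes a partition as a pair (core, $t$-divisible partition) in a way that \emph{adds sizes}, restricting $\Delta_t$ to the fiber of interest should immediately produce the product on the right-hand side.

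More concretely, I would first restrict the domain: consider
\[
S_{n,i} = \{\lambda \vdash n \mid |\core_t(\lambda)| = i\}.
\]
Applying $\Delta_t$ to any $\lambda \in S_{n,i}$ gives a pair $(\rho,\nu)$ with $\rho = \core_t(\lambda) \in \Core_t$ and $\nu \in \mathcal{D}_t$, and by the size equation $|\lambda| = |\rho| + |\nu|$ in Theorem~\ref{thm:partn-divn}, we obtain $|\rho| = i$ and $|\nu| = n-i$. Conversely, given any $\rho \in \Core_t$ with $|\rho| = i$ and $\nu \in \mathcal{D}_t$ with $|\nu| = n-i$, the preimage $\Delta_t^{-1}(\rho,\nu)$ is a partition of $n$ whose $t$-core is $\rho$, hence has size-$i$ core. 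So $\Delta_t$ restricts to a bijection
\[
S_{n,i} \;\longleftrightarrow\; \{\rho \in \Core_t : |\rho| = i\} \times \{\nu \in \mathcal{D}_t : |\nu| = n-i\}.
\]

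Taking cardinalities, the right-hand side has size $c_t(i) \cdot d_t(n-i)$, which is the claimed formula. There is essentially no obstacle here: the work has been done in Theorem~\ref{thm:partn-divn}, and one need only observe that the bijection respects the size decomposition so that counting by the core's size factors cleanly into the two independent counts $c_t(i)$ and $d_t(n-i)$. The only mild subtlety is making sure the hypothesis $i < n$ is used consistently (or noting the statement also holds at $i = n$, where $d_t(0) = 1$ since the empty partition is vacuously $t$-divisible); but this does not require any additional argument beyond the bijection itself.
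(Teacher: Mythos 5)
Your argument is correct and is exactly what the paper intends: the paper states this corollary as ``immediate'' from Theorem~\ref{thm:partn-divn}, and your restriction of the bijection $\Delta_t$ to the fiber $S_{n,i}$, using the size identity $|\lambda|=|\rho|+|\nu|$, is precisely the spelled-out version of that one-line deduction. No gaps.
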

	
	Since $d_t(n)=0$ for $t\nmid n$, the asymptotics of $d_t(n)$ for $t|n$ is given by a special case of \cite[Theorem~6.2]{andrews-1976} as 
	\begin{equation}
		\label{asy_d}
		d_t(n)\sim \frac{t^{(t+2)/2} \exp \left(\pi \sqrt{\frac{2n}{3}} \right)}{2^{(3t+5)/4} 3^{(t+1)/4}n^{(t+3)/4 } }.
	\end{equation}

	\section{Asymptotics of the number of $t$-cores}
	\label{sec:asymp}
	The asymptotics of the number of $t$-cores, $\numcore(n)$, was obtained using the circle method by Anderson~\cite[Theorem~2]{anderson-2008} for $t \geq 6$. We demonstrate a completely new method to obtain these asymptotics for $t \geq 2$.
	
	The following theorem describe the quantity $\numcore(n)$ as the number of integer solutions of a particular quadratic equation. This is a reformulation of \cite[Bijection 2]{garvan-kim-stanton-1990}, but we will give a new proof using the abacus representation.
	Let $H_t$ denote the hyperplane in $\mathbb{R}^t$ given by
	\[
	H_t = \{ (x_0,\dots,x_{t-1}) \mid x_0+x_1+\dots x_{t-1}=0 \}.
	\]
	Notice that our indices are labelled $0$ through $t-1$. Define 
	$F_t : \mathbb{R}^t \to \mathbb{R}$ by
	\begin{equation}
		\label{sphere}
		F_t(p) = \frac{t}{2} \sum_{i=0}^{t-1}p_i^2
		+ \sum_{i=0}^{t-1}ip_i.
	\end{equation} 
	
	\begin{rem}
		\label{rem:sphere}
		Since, for $p\in H_t $, $\sum_{i=0}^{t-1}p_i=0$, we subtract $(t-1) \allowbreak \times\left( \sum_{i=0}^{t-1}p_i \right)/2$ from the right hand side of~\eqref{sphere} to get 
		\begin{align}
			\label{sphere-2}
			F_t(p)=\sum_{i=0}^{t-1} \frac{t}{2}\bigg( p_i- \frac{t-1-2i}{2t} \bigg)^2-\frac{t^2-1}{24}.
		\end{align}
		Note that $F_t(p)=0$ is the formula for a $t$-dimensional sphere centered at $((t-1-2i)/(2t))_{0\le i\le t-1} $.
		The minimizer of $F_t(p)$ on integral points in $H_t$ is the closest point to the center of the sphere which is $p=(0,\dots,0)$, for which $F_t(p)=0$. Therefore if $p \in H_t\cap \mathbb{Z}^t$, $F_t(p)$ is a nonnegative integer.
	\end{rem}
	
	\begin{theorem}
		\label{thm:eq-sols}
		The number of $t$-cores of $n$, $\numcore(n)$, is equal to the number of integer solutions of $F_t(p) = n$ on the hyperplane $H_t$.
	\end{theorem}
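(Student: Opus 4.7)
The plan is to establish the desired bijection between $t$-cores of $n$ and integer points of $F_t^{-1}(n) \cap H_t$ in two steps: first parametrize $t$-cores by runner-justification positions, then compute the size as a quadratic form in those positions. Proposition~\ref{prop:t-runner}(1) already provides the first step: a $t$-core $\lambda$ is uniquely determined by its $t$-runner abacus $(\lambda^0,\ldots,\lambda^{t-1})$, where each $\lambda^i$ is justified at some $p_i \in \mathbb{Z}$, and the balanced condition on the global abacus forces $(p_0,\ldots,p_{t-1}) \in H_t$. This yields a bijection $\Core_t \leftrightarrow H_t \cap \mathbb{Z}^t$, so it remains to verify that under this identification $|\lambda| = F_t(p)$.

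For the second step, I would invoke Proposition~\ref{prop:core-facts}(1), which counts cells of $\lambda$ as pairs $(a,b) \in \mathbb{Z}^2$ with $a<b$, $w_a = 0$, $w_b = 1$ in the global abacus $w$. Writing $a = nt+i$ and $b = mt+j$ with $0 \le i,j \le t-1$, the zero/one conditions become $n \ge p_i$ and $m < p_j$. Pairs on the same runner ($i=j$) contribute nothing, and for $i \neq j$ the inequality $a<b$ reduces to $n \le m$ if $i<j$ and to $n<m$ if $i>j$. The combinatorial heart of the argument is showing that, for each unordered pair $\{i,j\}$ with $i<j$, the combined contribution from the two cross-runner configurations equals
\[
\frac{(p_j - p_i)(p_j - p_i + 1)}{2},
\]
regardless of the sign of $p_j-p_i$. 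A short case check confirms this: if $p_j \ge p_i$, only the zero-on-runner-$i$/one-on-runner-$j$ count is nonzero and equals $\binom{p_j - p_i + 1}{2}$, while if $p_j < p_i$, only the zero-on-runner-$j$/one-on-runner-$i$ count is nonzero and equals $\binom{p_i - p_j}{2}$; both expressions collapse to the quadratic above.

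Summing over $0 \le i < j \le t-1$ and invoking $\sum_i p_i = 0$, the quadratic part telescopes via the identity $\sum_{i,j}(p_j-p_i)^2 = 2t \sum_i p_i^2$ to $\tfrac{t}{2}\sum_i p_i^2$, while the linear part $\tfrac{1}{2} \sum_{i<j}(p_j - p_i)$ simplifies to $\sum_i i p_i$. This gives $|\lambda| = F_t(p)$ and finishes the proof. The only subtle point is the uniform collapse of the two sign regimes into one clean quadratic formula; once that identity is in hand, everything else is routine algebra or a direct appeal to the propositions from Section~\ref{sec:core-quo}.
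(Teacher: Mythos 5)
Your proof is correct, but the second step is genuinely different from the paper's. The paper, after invoking Proposition~\ref{prop:t-runner}(1) for the parametrization (as you do), proves $|\lambda| = F_t(p)$ by induction on $n$: it removes the max-hook of the $t$-core, observes that this yields a smaller $t$-core whose justification vector is $p' = (p_0,\dots,p_i-1,\dots,p_j+1,\dots,p_{t-1})$, and checks that the size of the removed hook equals $F_t(p) - F_t(p')= t(p_i-p_j-1)+(i-j)$. You instead compute $|\lambda|$ in closed form in one shot, counting cells via Proposition~\ref{prop:core-facts}(1) as cross-runner $(0,1)$ pairs; I verified your key identity that the two sign regimes for runners $i<j$ both contribute $\binom{p_j-p_i+1}{2}$ when $p_j \ge p_i$ and $\binom{p_i-p_j}{2}$ otherwise, and that both collapse to $(p_j-p_i)(p_j-p_i+1)/2$, after which the reduction to $\tfrac{t}{2}\sum_i p_i^2 + \sum_i i p_i$ using $\sum_i p_i = 0$ is routine. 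Your route is slightly longer but self-contained and arguably more transparent: it exhibits the quadratic form directly as a cell count and avoids having to argue that max-hook removal preserves the $t$-core property; the paper's induction is shorter and reuses the rim-hook/abacus swap mechanics already set up in Proposition~\ref{prop:core-facts}(3). Both are complete proofs.
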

	
	\begin{proof}
		Let $p = (p_0, \dots, p_{t-1}) \in H_t \cap \mathbb{Z}^t$. By considering $p_i$ as the position of justification of the $i$'th runner in a $t$-runner abacus for a $t$-core, we see that such $p$'s
		are naturally in bijection with $t$-cores by Proposition~\ref{prop:t-runner}(1). Therefore, it suffices to show that the positions of justification of $t$-cores of size $n$ satisfy $F_t(p) = n$.
		Suppose $\lambda$ is a $t$-core and $p  = (p_0,\dots, p_{t-1})$ records the positions of justification of the $i$'th runner. We then claim that $F_t(p) = |\lambda|$. 
		
		We will prove that $p \in H_t$ and $F_t(p) = n$ by induction on $n$. For the base case of $n=0$, it can be verified that the empty partition is the unique $t$-core, for which $p=(0,\dots,0)$. 
		
		Suppose $n \geq 1$ and $\lambda \vdash n$.
		Observe that removing the max-hook from $\lambda$ yields a $t$-core, $\mu$ say, since it does not change the hook lengths of the remaining cells. Removing a hook corresponds to swapping a $(0,1)$ pair in its $t$-runner abacus. Thus $\mu$ has $p' = (p_0,..,p_i-1,..,p_j+1,..,p_{t-1})$ as the positions of justifications of its $t$-runner (the relative positions of $i$ and $j$ are not important). The required statements follow by the induction hypothesis, by noting
		that the size of the max-hook is $F_t(p) - F_t(p') = t(p_i-p_j-1)+(i-j)$. 
	\end{proof}

	Equating $F_t(p)=n$ using \eqref{sphere-2} yields the equation of a $t$-dimensional sphere centered at a point in the hyperplane $H_t$ given by
	\begin{equation}
		\label{c_sphere}
		\sum_{i=0}^{t-1} \bigg( p_i- \frac{t-1-2i}{2t} \bigg)^2 = \frac{2}{t}\bigg(n + \frac{t^2-1}{24}\bigg).
	\end{equation}
	We will now compute the volume of the $(t-1)$-dimensional ball $B_{t-1}(n)$ whose boundary is given by \eqref{c_sphere} and contained in the hyperplane $H_t$.
	The radius of $B_{t-1}(n)$ is unchanged since the center 
	$(t-1, t-3, \dots, -(t-3), -(t-1))/(2t)$
	lies in $H_t$. The following proposition follows from the standard formula for the volume of a $d$-dimensional sphere of radius $r$.
	
	\begin{prop}
		\label{prop:volume}
		The volume $V_t(n)$ of $B_{t-1}(n)$ under the induced measure on the hyperplane $H_t$, is 
		\[
		V_t(n)=\frac{1}{\Gamma(\frac{t+1}{2})} \bigg(\frac{2\pi}{t} \left( n+\frac{t^2-1}{24} \right) \bigg)^{(t-1)/2}.
		\]
	\end{prop}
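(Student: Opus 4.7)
The plan is to identify the intersection of the ambient $t$-dimensional ball with the hyperplane $H_t$ as a $(t-1)$-dimensional ball of the same radius, and then apply the standard volume formula.

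First I would verify that the center $c = ((t-1)/(2t), (t-3)/(2t), \dots, -(t-1)/(2t))$ actually lies in $H_t$. The coordinate sum is $\frac{1}{2t}\sum_{i=0}^{t-1}(t-1-2i) = \frac{1}{2t}(t(t-1) - 2\cdot\frac{t(t-1)}{2}) = 0$, as noted in the statement of the proposition. Consequently, the orthogonal projection of the center of the ambient ball onto $H_t$ is the center itself, so the slice of the ball by $H_t$ is a $(t-1)$-dimensional ball whose radius equals the radius $R$ of the ambient ball. From equation~\eqref{c_sphere}, this common radius satisfies
\[
R^2 = \frac{2}{t}\Bigl(n+\frac{t^2-1}{24}\Bigr).
\]

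Next I would invoke the standard formula for the volume of a $d$-dimensional Euclidean ball of radius $R$, namely $\pi^{d/2}R^{d}/\Gamma(d/2+1)$. Setting $d=t-1$ and substituting the value of $R$ obtained above gives
\[
V_t(n) = \frac{\pi^{(t-1)/2}}{\Gamma\bigl(\frac{t+1}{2}\bigr)}\Bigl(\frac{2}{t}\Bigl(n+\frac{t^2-1}{24}\Bigr)\Bigr)^{(t-1)/2},
\]
which collapses to the stated expression after absorbing the factor $2^{(t-1)/2}$ into the parenthesized term.

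There is essentially no obstacle here: the only content is checking that the center of the sphere lies in $H_t$ so that the slice is a full equatorial ball rather than a smaller cap, and then the volume formula is standard. Care should be taken that the induced measure on $H_t$ is the natural $(t-1)$-dimensional Lebesgue measure coming from the Euclidean structure restricted from $\mathbb{R}^t$, which is consistent with applying the classical volume formula directly to the slice.
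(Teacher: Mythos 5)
Your proof is correct and follows the same route as the paper: observe that the center of the sphere in \eqref{c_sphere} lies in $H_t$, so the slice is an equatorial $(t-1)$-ball of the same radius, and then apply the standard volume formula $\pi^{d/2}R^d/\Gamma(d/2+1)$ with $d=t-1$. The algebra checks out, so nothing further is needed.
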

	
	Define the lattice $\Lambda_t = H_t\cap \mathbb{Z}^t$. It is clear that $\Lambda_t$ is a full lattice of the codimension one subspace $H_t$ with $\mathbb{Z}$-basis $\{e_0-e_1,e_0-e_2,\dots, e_0-e_{t-1} \}$. We choose the parallelepiped formed by the $\mathbb{Z}$-basis of a lattice to be a fundamental domain of the lattice.
	
	\begin{prop}
		\label{prop:fun_domain}
		The volume $V(\Lambda_t)$ of the fundamental domain of $\Lambda_t$ in $H_t$ is $\sqrt{t}$.
	\end{prop}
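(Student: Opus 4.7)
The plan is to identify the volume of the fundamental parallelepiped with the square root of the Gram determinant of the given $\mathbb{Z}$-basis, using the Euclidean metric on $\mathbb{R}^t$ restricted to $H_t$. Concretely, setting $v_i = e_0 - e_i$ for $1 \le i \le t-1$, the volume of the parallelepiped they span inside $H_t$ is $\sqrt{\det G}$, where $G$ is the $(t-1)\times(t-1)$ matrix with entries $G_{ij} = \langle v_i, v_j \rangle$. This is just the standard fact that for any linearly independent vectors in Euclidean space, the square of the parallelepiped volume equals the Gram determinant; no appeal to the ambient hyperplane structure is needed beyond restricting the inner product.

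Next I would compute the entries. Since $\langle e_0 - e_i, e_0 - e_j \rangle = 1 + \delta_{ij}$, the Gram matrix is $G = I_{t-1} + J_{t-1}$, where $J_{t-1}$ denotes the $(t-1) \times (t-1)$ all-ones matrix. The spectrum of $J_{t-1}$ is $\{t-1, 0, \dots, 0\}$ (the nonzero eigenvector being $(1,1,\dots,1)$), so the spectrum of $G$ is $\{t, 1, \dots, 1\}$. Therefore $\det G = t$ and $V(\Lambda_t) = \sqrt{t}$.

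An alternative route I would mention as a sanity check is the classical formula that for a hyperplane $\{a_0 x_0 + \cdots + a_{t-1} x_{t-1} = 0\}$ with $\gcd(a_0,\dots,a_{t-1}) = 1$, the induced lattice $H_t \cap \mathbb{Z}^t$ has covolume $\sqrt{a_0^2 + \cdots + a_{t-1}^2}$; with all $a_i = 1$ this immediately gives $\sqrt{t}$. There is no real obstacle here — the only thing to be careful about is that the ``volume'' in the statement refers to the induced $(t-1)$-dimensional Lebesgue measure on $H_t$ (not a projection to coordinates), which is exactly what the Gram determinant computes.
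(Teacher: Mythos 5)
Your proof is correct and follows essentially the same route as the paper: both compute the Gram matrix of the basis $\{e_0 - e_i\}$, obtain $I + J$, and take the square root of its determinant $t$. The eigenvalue computation of $\det(I+J)$ and the covolume-of-a-hyperplane sanity check are fine additions but do not change the substance.
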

	
	\begin{proof}
		It is a standard fact that the volume $V(\Lambda)$ of a parallelepiped $\Lambda$ spanned by $m$ linearly independent vectors $v_1, \dots, v_m$ in $\mathbb{R}^n$ can be computed by taking the square root of the determinant of the corresponding $m \times m$ Gram matrix $(\langle v_i, v_j \rangle)$. To see this, let $A$ be the $m \times n$ matrix whose rows are the vectors. Then it is clear that 
		\begin{equation}
			V(\Lambda_t)^2= \det A \, A^t = \det (\langle\alpha_i,\alpha_j\rangle)_{i,j=1}^m.
		\end{equation}
		In our case, $A=(\alpha_i)$ is a $(t-1) \times t$ matrix where $\alpha_i=e_0-e_i$. Now note that $\langle \alpha_i,\alpha_j\rangle $ equals 1 when $i\ne j$ and 2 when $i=j$. Thus $A \, A^t = I + J$, where $J$ is the all-ones matrix. The determinant of $A \, A^t$ is easily computed to be $t$.
	\end{proof}
	
	The following is a standard result in number theory.
	
	\begin{prop}[{\cite{vinogradov-1979}}]
		\label{prop:volume-ellipsoid}
		Let $\Lambda$ be a full lattice in $\mathbb{R}^{d}$ with fundamental domain of volume $V(\Lambda)$. Let $B_d(r,p)$ be a ball of radius $r$ centered at some fixed point $p$ with volume $V_d(r)$ . Then
		\begin{equation}
			\big|\Lambda \cap B_d(r,p)\big|= \frac{V_d(r)}{V(\Lambda)} +
			O(r^{d-1}).
		\end{equation}
	\end{prop}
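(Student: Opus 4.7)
The plan is to use the classical fundamental-domain packing argument. Fix a fundamental parallelepiped $F$ for $\Lambda$ so that the translates $\{\ell+F : \ell \in \Lambda\}$ tile $\mathbb{R}^d$ up to a set of measure zero, each translate having volume $V(\Lambda)$. Let $D$ denote the diameter of $F$; this is a finite constant depending only on $\Lambda$, independent of $r$ and $p$.

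Write $N := |\Lambda \cap B_d(r,p)|$ and form the region
\[
S = \bigcup_{\ell \in \Lambda \cap B_d(r,p)} (\ell + F).
\]
Since the translates are essentially disjoint, the volume of $S$ equals $N \cdot V(\Lambda)$. The key geometric observation is the sandwich
\[
B_d(r-D,\, p) \;\subseteq\; S \;\subseteq\; B_d(r+D,\, p).
\]
The right inclusion is immediate: any point of $S$ lies within distance $D$ of some lattice point in $B_d(r,p)$, hence within distance $r+D$ of $p$. For the left inclusion, every point $x\in\mathbb{R}^d$ belongs to some translate $\ell+F$, giving $\|\ell-p\| \leq \|x-p\|+D$; when $x\in B_d(r-D,p)$ this forces $\ell\in B_d(r,p)$, so $x\in S$.

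Taking volumes yields
\[
V_d(r-D) \;\leq\; N \cdot V(\Lambda) \;\leq\; V_d(r+D).
\]
The proof concludes with the elementary estimate $V_d(r\pm D) - V_d(r) = O(r^{d-1})$, which follows because $V_d(\cdot)$ is a polynomial of degree $d$ in the radius, so the spherical shell $B_d(r+D,p) \setminus B_d(r-D,p)$ has volume $\Theta(D\cdot r^{d-1})$ as $r\to\infty$. Dividing through by $V(\Lambda)$ gives the stated asymptotic.

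I do not anticipate any real obstacle, as the argument is self-contained Euclidean geometry. The only bookkeeping to watch is that the implicit constant in $O(r^{d-1})$ depends on $d$, $D$, and $V(\Lambda)$ but is uniform in $r$ and in the center $p$; this is transparent from the construction, since $D$ and $V(\Lambda)$ are geometric invariants of $\Lambda$ chosen before introducing the ball.
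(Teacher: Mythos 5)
Your argument is correct. Note that the paper does not prove this proposition at all --- it is stated as a standard fact with a citation to Vinogradov --- so there is no in-paper proof to compare against; your fundamental-domain packing argument (sandwiching the union of translates $\bigcup_{\ell}(\ell+F)$ between $B_d(r-D,p)$ and $B_d(r+D,p)$ and comparing volumes) is exactly the classical Gauss-type proof one would find in the cited reference, and all the steps, including the uniformity of the implied constant in $p$ and $r$, check out. The only pedantic caveats, neither of which affects the application in the paper (where $r\to\infty$), are that the left inclusion uses $0\in\overline{F}$, which holds for the half-open parallelepiped spanned by a lattice basis, and that the bound $V_d(r\pm D)-V_d(r)=O(r^{d-1})$ is an asymptotic statement for $r$ bounded away from $0$.
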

	
	Recall that the number of partitions obtained by taking $t$-cores of all partitions of $n$ is denoted by $\coresum(n)$ and is defined in\eqref{def-coresum}.
	
	\begin{prop}
		\label{prop:coresum-formula}
		\[
		\coresum(n) = \sum_{i=0}^{\lfloor \frac{n}{t}\rfloor} \numcore(n - i \, t).
		\]
	\end{prop}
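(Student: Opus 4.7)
The plan is to invoke the partition division theorem (Theorem~\ref{thm:partn-divn}) to characterize exactly which $t$-cores arise as $\core_t(\lambda)$ for some $\lambda \vdash n$, and then count them.

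First, I would fix the viewpoint: $\coresum(n)$ counts the \emph{distinct} $t$-cores $\rho$ for which there exists at least one partition $\lambda \vdash n$ with $\core_t(\lambda)=\rho$. Call this set $S_n \subseteq \Core_t$. I would show that
\[
S_n = \{\rho \in \Core_t : |\rho| \leq n \text{ and } t \mid (n-|\rho|)\}.
\]
For the forward inclusion, given $\lambda \vdash n$ with $\core_t(\lambda) = \rho$, the bijection $\Delta_t$ produces a $t$-divisible partition $\nu$ with $|\lambda| = |\rho| + |\nu|$. Since $\nu$ is $t$-divisible, $|\nu| = t\sum_k |\nu^k|$ is a nonnegative multiple of $t$, so $|\rho| \leq n$ and $n-|\rho| \in t\mathbb{Z}_{\geq 0}$.

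For the reverse inclusion, suppose $\rho$ is a $t$-core with $|\rho| \leq n$ and $n - |\rho| = tm$ for some $m \geq 0$. I would pick any $t$-divisible partition $\nu$ of size $tm$ and set $\lambda = \Delta_t^{-1}(\rho,\nu)$, which satisfies $|\lambda| = n$ and $\core_t(\lambda) = \rho$. The one thing to justify is that such a $\nu$ exists, i.e., $d_t(tm) \geq 1$ for every $m \geq 0$. This is immediate from \eqref{emptycore-gf}: the product $\prod_{k\ge 1}(1-x^{tk})^{-t}$ has nonnegative coefficients, and the constant term is $1$, while for $m \geq 1$ the term $(1-x^{tm})^{-t} = 1 + tx^{tm} + \cdots$ already contributes a positive coefficient at $x^{tm}$.

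Having established the characterization of $S_n$, the formula follows by partitioning $S_n$ according to $|\rho|$: writing $|\rho| = n - it$ with $0 \leq i \leq \lfloor n/t \rfloor$,
\[
\coresum(n) = |S_n| = \sum_{i=0}^{\lfloor n/t \rfloor} \#\{\rho \in \Core_t : |\rho| = n-it\} = \sum_{i=0}^{\lfloor n/t \rfloor} \numcore(n-it).
\]
There is no real obstacle here; the only mildly nontrivial point is the positivity $d_t(tm) \geq 1$, which is a one-line consequence of the generating function. The content of the proposition is essentially a repackaging of Theorem~\ref{thm:partn-divn}.
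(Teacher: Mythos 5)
Your proof is correct and follows essentially the same route as the paper: the forward direction via Theorem~\ref{thm:partn-divn} (the core's size is congruent to $n$ modulo $t$ and at most $n$), and the reverse direction by exhibiting a partition of $n$ with a prescribed core. The only cosmetic difference is in the surjectivity witness: the paper adds $jt$ to the first part of the core, while you invoke $\Delta_t^{-1}$ with any $t$-divisible partition of size $n-|\rho|$, justifying its existence from the generating function \eqref{emptycore-gf}; both work.
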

	
	\begin{proof}
		Observe that for any $\lambda \vdash n$, $|\core_t(\lambda)|\equiv |\lambda|\mod{t}$. Hence, from Theorem~\ref{thm:partn-divn}, we obtain 
		\begin{equation*}
			\coresum (n)=\# \{\core_t(\lambda) : \lambda \vdash n \} \leq \sum_{i=0}^{\lfloor\frac{n}{t}\rfloor }\numcore(n-it).
		\end{equation*} 
		The reverse inequality holds because for any $t$-core $\mu$ of size $n-jt$, where $0 \leq j \leq \lfloor n/t \rfloor$, there exists a $\lambda\vdash n$ such that $\core_t(\lambda)=\mu$. One simple way to construct such a $\lambda$ is to add $jt$ to the first part of $\mu$. 
	\end{proof}
	
	For the next result, we will work over the module $(\mathbb{Z}/t\mathbb{Z})^{t}$. To avoid confusion, we will denote points in $(\mathbb{Z}/t\mathbb{Z})^{t}$ with a tilde, e.g. $\widetilde{P},\widetilde{Q}$.
	
	\begin{lem}
		\label{lem:size-Zmodt}
		For any $n \in \mathbb{N}$, the cardinality of the set 
		$ \big\{\widetilde Q\in \widetilde{H}_t \mid F_t(\widetilde Q)\equiv n\mod{t} \big\}$ is $t^{t-2}$,
		where $F_t$ is defined in \eqref{sphere} and $\widetilde{H}_t$ is the hyperplane of $(\mathbb{Z}/t\mathbb{Z})^{t}$ defined by $q_0+q_1+\dots +q_{t-1}=0$.
	\end{lem}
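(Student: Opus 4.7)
The plan is to show that $F_t$ descends to a surjective group homomorphism $\phi: \widetilde{H}_t \to \mathbb{Z}/t\mathbb{Z}$, so the first isomorphism theorem will immediately give $|\phi^{-1}(n)| = |\widetilde{H}_t|/t = t^{t-1}/t = t^{t-2}$ for every residue $n$. First I would identify $\widetilde{H}_t \cong \Lambda_t/t\Lambda_t$ via reduction modulo $t$: every $\widetilde{Q} \in \widetilde{H}_t$ admits an integer lift $Q \in \Lambda_t = H_t \cap \mathbb{Z}^t$ (shift one coordinate to cancel the class of $\sum q_i \bmod t$), and two such lifts differ by an element of $t\Lambda_t$. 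On such lifts, $F_t(Q)$ is automatically an integer, since $\sum q_i = 0$ forces $\sum q_i^2 \equiv \sum q_i \equiv 0 \pmod{2}$, making $\tfrac{t}{2}\sum q_i^2$ an integer.

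Next, I would verify the homomorphism property and the well-definedness of the descent simultaneously, using the identity
\[
F_t(Q+V) = F_t(Q) + F_t(V) + t\langle Q, V\rangle,
\]
which follows by direct expansion of the quadratic form. Reducing modulo $t$, this says $\phi := F_t \bmod t$ is additive on $\Lambda_t$. To show that $\phi$ vanishes on $t\Lambda_t$, and hence descends to $\widetilde{H}_t$, I would compute
\[
F_t(tM) = \tfrac{t^3}{2}|M|^2 + t\sum_{i} i \, m_i
\]
for $M \in \Lambda_t$. The second summand is a multiple of $t$; for the first, the case $t$ even is immediate, while the case $t$ odd uses the observation that $|M|^2 \equiv \sum m_i = 0 \pmod{2}$, so $\tfrac{|M|^2}{2} \in \mathbb{Z}$ and $\tfrac{t^3}{2}|M|^2 = t \cdot t^2 \cdot \tfrac{|M|^2}{2}$ is a multiple of $t$.

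Finally, for surjectivity I would evaluate at $Q_0 = (-1,1,0,\ldots,0) \in \Lambda_t$, obtaining $F_t(Q_0) = \tfrac{t}{2}(1+1) + 1 = t+1 \equiv 1 \pmod{t}$, which generates $\mathbb{Z}/t\mathbb{Z}$. Combined with $|\widetilde{H}_t| = t^{t-1}$ (the first $t-1$ coordinates are free, and the last is forced), the first isomorphism theorem completes the proof. The main obstacle I anticipate is the well-definedness step when $t$ is odd, since then the coefficient $\tfrac{t}{2}$ is not an integer and one cannot treat $F_t$ as a quadratic form modulo $t$ naively; this is precisely where the evenness of $|M|^2$ for $M \in \Lambda_t$ is essential, and it is the only arithmetic subtlety in the argument.
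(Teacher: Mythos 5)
Your proof is correct. It rests on the same arithmetic kernel as the paper's argument --- the quadratic part $\tfrac{t}{2}\sum_i x_i^2$ of $F_t$ is a multiple of $t$ at integer points of $H_t$, because $\sum_i x_i^2 \equiv \sum_i x_i \equiv 0 \pmod{2}$ there --- but the counting is organized differently. The paper uses this observation to replace $F_t$ modulo $t$ by the linear form $x_1 + 2x_2 + \cdots + (t-1)x_{t-1}$ and then solves the resulting pair of linear equations over $\mathbb{Z}/t\mathbb{Z}$ directly: $(q_2,\dots,q_{t-1})$ are free parameters and $(x_0,x_1)$ is uniquely determined, giving $t^{t-2}$. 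You instead keep the quadratic form, use the polarization identity $F_t(Q+V)=F_t(Q)+F_t(V)+t\langle Q,V\rangle$ to show that $F_t \bmod t$ is additive on $\Lambda_t$, check that it kills $t\Lambda_t$, and conclude from surjectivity that all fibers of the induced homomorphism $\widetilde{H}_t\cong\Lambda_t/t\Lambda_t\to\mathbb{Z}/t\mathbb{Z}$ are cosets of the kernel, of size $t^{t-1}/t$. Your route is a little longer but makes explicit two things the paper leaves implicit: why $F_t$ is even well defined modulo $t$ on $\widetilde{H}_t$ (the odd-$t$ parity subtlety you isolate is exactly the content hidden in the paper's opening ``Observe that\dots''), and the identification $\widetilde{H}_t\cong\Lambda_t/t\Lambda_t$, which is precisely the form in which the lemma is applied in the proof of Theorem~\ref{thm:sum}. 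Either argument is acceptable; the paper's is shorter once one grants the reduction to a linear form, while yours transfers verbatim to any integer-valued function on $\Lambda_t$ that is additive modulo $t$.
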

	
	\begin{proof}
		Observe that $F_t(x_0,x_1,\dots x_{t-1})\equiv x_1+2x_2+\dots +(t-1)x_{t-1}$ modulo $t$. It is enough to note that for any tuple $(q_2,q_3,\dots q_{t-1})\in (\mathbb{Z}/t\mathbb{Z})^{t-2}$, there exist a unique solution for $(x_0,x_1)$ satisfying the equations 
		\begin{align*}
			x_0+x_1 &= -(q_2+q_3+\dots+q_{t-1}), \\
			x_1 &= -(2q_2+3q_3+\dots +(t-1)q_{t-1}),
		\end{align*}
		in $\mathbb{Z}/t\mathbb{Z}$. 
	\end{proof}
	
	We are now in a position to prove the main result of this section. Recall that $\Lambda_t = H_t\cap \mathbb{Z}^t$.
	
	\begin{proof}[Proof of Theorem~\ref{thm:sum}]
		From Theorem~\ref{thm:eq-sols} and Proposition~\ref{prop:coresum-formula}, we 
		have that 
		\begin{equation}
			\coresum(n)=\# \big\{Q \in \Lambda_t \cap B_{t-1}(n) \mid F_t(Q)\equiv n \mod t \big\}.
		\end{equation}
		Remark \ref{rem:sphere} ensures that $F_t(Q)$ is never negative.
		Splitting the set of points in the set on the right hand side according to their remainder modulo $t$, we arrive at 
		\begin{equation}
			\coresum(n)=
			\sum_{\substack{\widetilde{P} \in \Lambda_{t}/t\Lambda_{t} \\ 
					F(\widetilde{P}) \equiv n \mod{t} }}
			\# \left( (\widetilde{P}+t\Lambda_{t}) \cap B_{t-1}(n) \right),
		\end{equation}
		where $\widetilde P+t\Lambda_{t}$ is the lattice $t \Lambda_t$ shifted by $\widetilde{P}$. 
		Clearly, the fundamental domain has volume  $V(\widetilde P+t\Lambda_{t}) =t^{t-1}V(\Lambda_{t})$. 
		Using Proposition~\ref{prop:volume-ellipsoid}, 
		\begin{equation}
			\coresum(n) = \sum_{\substack{\widetilde{P}\in \Lambda_{t}/t\Lambda_{t} \\ F(\widetilde{P})\equiv n \mod{t} }}
			\Bigg(\frac{V_t(n)}{t^{t-1}V(\Lambda_{t})}+ O \big(n^{(t-2)/2}\big) \Bigg),
		\end{equation}
		where $V_t(n)$ is computed in Proposition~\ref{prop:volume}. 
		Now, use Lemma~\ref{lem:size-Zmodt} to see that each modulo class gives the same result and thus Proposition~\ref{prop:fun_domain} yields 
		\begin{align*}
			\coresum(n) &=  \Bigg(\frac{V_t(n)}{t^{t-1}V(\Lambda_{t})}+ O\big(n^{(t-2)/2}\big) \Bigg)t^{t-2}\\
			&= \frac{V_t(n)}{t^{3/2}}+ O(n^{(t-2)/2}),
		\end{align*}
		leading to the desired result.
	\end{proof}
	
	\begin{proof}[Proof of Corollary~\ref{cor:numcore-size}]
		First, note that $\numcore(n)=\coresum(n)-\coresum(n-t)$ by Proposition~\ref{prop:coresum-formula}. 
		From the proof of Theorem~\ref{thm:sum} and the mean value theorem, it follows that
		\[
		\numcore(n) = \frac{V_t(n)-V_t(n-t)}{t^{3/2}}+O(n^{(t-2)/2}) 
		= \frac{V_t'(x)}{t^{1/2}} + O(n^{(t-2)/2})
		\]
		for some $x\in [n-t,n]$, where $V_t(x)$ is given in Proposition~\ref{prop:volume}. 
		We remark that the first term here is exactly the same as the first term in Anderson's formula~\cite[Theorem 2]{anderson-2008}.
		Our result follows because $V_t'(n)$ is $O(n^{(t-3)/2})$ and increasing as a function of $n$ when $n$ is large enough.
	\end{proof}

	\section{Distribution of core sizes}
	\label{sec:distrib}
	
	In this section we will give the proof of Theorem~\ref{thm:conv} by calculating the moments of $X_n$. 
	As usual, we fix $t \geq 2$.
	Let $\epsilon(n) \in \{0,1,\dots,t-1\}$ be the remainder when $n$ is divided by $t$. 
	For convenience, define
	\begin{equation}
		\label{def-ln}
		\ell_n(y) = t\lfloor y\sqrt{n}\rfloor+\epsilon(n), \quad y\ge 0.
	\end{equation}
	Notice that for $y$ large enough, 
	\begin{equation}\label{eq:inequality_l_n(y)}
		\frac{1}{2}<\frac{yt\sqrt{n}}{\ell_n(y)}< 2.
	\end{equation}
	We will make crucial use of the functions
	\begin{equation}
		\label{def-psi-n}
		\psi_n(y) = \frac{\numcore(\ell_n(y))}{n^{(t-3)/4}},
	\end{equation}
	and
	\begin{equation}
		\label{def-gnk}
		g_{n,k}(y) = 
		\begin{cases}
			\ds \ell_n(y)^{k} n^{(t-1)/4-k/2} \frac{d_t(n - \ell_n(y))}{p(n)} & y > 0, \\
			0 & y \le 0.
		\end{cases}
	\end{equation}
	Recall the PMF $\mu_n(x)$ defined in \eqref{def-mu}.
	The functions $g_{n,k}$ and $\psi_n$ are defined so that $\psi_n(y) g_{n,0}(y)=\sqrt{n}\mu_n(\ell_n(y))$.
	
	\begin{lem}
		\label{lem:gn-bound}
		There exists a polynomial $Q_k$ and a constant $\theta$ independent of $n$, such that, for all $y \in \mathbb{R}$, 
		\begin{equation}
			|g_{n,k}(y)| < Q_k(y) e^{-\theta y}.
		\end{equation}
	\end{lem}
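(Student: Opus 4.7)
The proof relies on extracting uniform inequalities from the Hardy--Ramanujan asymptotic \eqref{asy_p} for $p(n)$ and the Meinardus-type asymptotic \eqref{asy_d} for $d_t(m)$. The plan is to first show that there exist constants $A,B>0$ with
\[
p(n)\ge A\,n^{-1}\exp(\pi\sqrt{2n/3}),\qquad d_t(m)\le B\,(m+1)^{-(t+3)/4}\exp(\pi\sqrt{2m/3}),
\]
the first for all $n\ge1$ and the second for all $m\ge0$ with $t\mid m$. The asymptotics deliver these inequalities for large $n$ and $m$; the finitely many small values (including $d_t(0)=1$) can be absorbed into the constants. For $y\le 0$ the claim is vacuous since $g_{n,k}(y)=0$; for $y>0$ we may also assume $u:=\ell_n(y)\le n$, for otherwise $d_t(n-u)=0$. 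Substituting the two bounds into the definition of $g_{n,k}$ gives
\[
g_{n,k}(y)\le \frac{B}{A}\,u^k\,n^{(t-1)/4-k/2+1}(n-u+1)^{-(t+3)/4}\exp\!\bigl(\pi\sqrt{\tfrac{2}{3}}(\sqrt{n-u}-\sqrt{n})\bigr).
\]

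Next I would control the exponential factor using the elementary inequality $\sqrt{n-u}-\sqrt{n}=-u/(\sqrt{n-u}+\sqrt{n})\le -u/(2\sqrt{n})$, together with the identity $\pi\sqrt{2/3}/2=\pi/\sqrt{6}$, to obtain $\exp(\pi\sqrt{2/3}(\sqrt{n-u}-\sqrt{n}))\le \exp(-\pi u/\sqrt{6n})$. By \eqref{eq:inequality_l_n(y)}, whenever $y\sqrt{n}$ is sufficiently large we have $u\ge yt\sqrt{n}/2$, which converts this into $\exp(-\pi ty/(2\sqrt{6}))$, the desired $e^{-\theta y}$ decay with $\theta=\pi t/(2\sqrt{6})>0$. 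The polynomial prefactor is then handled by a short case split. If $u\le n/2$, then $(n-u+1)^{-(t+3)/4}\le 2^{(t+3)/4}n^{-(t+3)/4}$ and a direct calculation gives $(t-1)/4-k/2+1-(t+3)/4=-k/2$, so the prefactor collapses to a constant times $(u/\sqrt{n})^k\le(ty+t)^k$, a polynomial in $y$. If instead $u>n/2$, then the bound $u\le yt\sqrt{n}+t$ forces $\sqrt{n}\le 2ty+O(1)$, so $n$ itself is bounded by a polynomial in $y$; we discard the factor $(n-u+1)^{-(t+3)/4}\le 1$ and bound every remaining power of $n$ or $u$ by a polynomial in $y$. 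Finally, the exceptional regime $y\sqrt{n}<2$ (which forces $y<2$ and $u<2t$) is handled directly: the asymptotic bounds yield $g_{n,k}(y)\le$ some universal constant, dominated by a constant multiple of $e^{-\theta y}$ over this bounded $y$ range. Taking $Q_k$ to be the sum of the polynomial bounds from the various cases and $\theta=\pi t/(2\sqrt{6})$ finishes the proof.

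The main obstacle is the regime where $u$ is comparable to $n$: there $(n-u+1)^{-(t+3)/4}$ is uncontrolled by any negative power of $n$, so cancellation of $n$-exponents alone does not suffice. The resolution is that the combination $u>n/2$ and $u\le yt\sqrt{n}+O(1)$ implicitly forces $\sqrt{n}=O(y)$, allowing every $n$-power in the prefactor to be traded for a polynomial in $y$. A secondary technical point is extending the asymptotic estimates for $p$ and $d_t$ to uniform inequalities valid for all $n$ and $m$, which is done by absorbing finitely many small values into the constants $A$ and $B$.
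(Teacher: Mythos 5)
Your proposal is correct and follows essentially the same route as the paper's proof: uniform two-sided bounds extracted from the asymptotics \eqref{asy_p} and \eqref{asy_d}, a case split on $\ell_n(y)\le n/2$ versus $n/2<\ell_n(y)\le n$ versus $\ell_n(y)>n$, exponential decay in $y$ from $\sqrt{n}-\sqrt{n-\ell_n(y)}\ge \ell_n(y)/(2\sqrt{n})$ together with \eqref{eq:inequality_l_n(y)}, and polynomial control of the prefactor by trading powers of $n$ for powers of $y$ when $\ell_n(y)>n/2$. Your explicit treatment of the bounded-$y$ regime is a small tidiness improvement over the paper's exposition, but the argument is the same.
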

	\begin{proof}
		Using the asymptotic results in \eqref{asy_p}, \eqref{asy_d} and the fact that $p$ and $d_t$  are functions on natural numbers, we 
		obtain that for suitably chosen positive constants $c_1$ and $c_2$,
		\begin{align*}
			p(n) > \frac{c_1  \exp \left(\pi\sqrt{\frac{2n}{3}} \right)}{n}, \quad
			d_t(n) < \frac{c_2  \exp \left(\pi\sqrt{\frac{2n}{3}} \right)}
			{(n+1)^{(t+3)/4}}
		\end{align*}
		for all $n\in \mathbb{N}$. Now, there are three cases depending on the value of $y$.
		
		\noindent
		\textbf{Case 1}: $\ell_n(y) \leq n/2$: \\
		Observe that for $y>1$, we have  
		\[
		\sqrt{n}-\sqrt{n - \ell_n(y)} > \frac{\ell_n(y)}{2\sqrt{n}}>\frac{y}{2}
		\]
		for all $n \ge 1$.
		Therefore, we obtain
		\begin{multline*}
			\frac{n^{(t-1)/4} d_t(n - \ell_n(y)) }{p(n)} \\
			< n^{(t-1)/4} \frac{c_2 \exp \left(\pi\sqrt{\frac{2}{3}(n - \ell_n(y))} \right) }
			{ (n+1 - \ell_n(y))^{(t+3)/4} } \frac{n} {c_1 \exp \left(\pi\sqrt{\frac{2n}{3}} \right)}\\
			<  \frac{c_2}{c_1} 2^{(t+3)/4} \exp \left(-\frac{\pi y}{\sqrt{6}} \right).
		\end{multline*}
		Hence 
		\[
		g_{n,k}(y)< \frac{c_2}{c_1} t^k y^k 2^{(t+3)/4} \exp \left(-\frac{\pi y}{\sqrt{6}} \right).
		\]
		
		\noindent
		\textbf{Case 2}: $n/2 < \ell_n(y) \le n$:\\
		Here, we have $\sqrt{n}/(4t) < y < 2\sqrt{n}/t$ by \eqref{eq:inequality_l_n(y)}, so that $y^2t^2/4 < n < 16y^2t^2$. 
		Using this fact, we obtain 
		\begin{multline*}
			\frac{n^{(t-1)/4} d_t(n - \ell_n(y)) }{p(n)} \\
			<  n^{(t-1)/4} \frac{c_2 \exp \left(\pi\sqrt{\frac{2}{3}(n - \ell_n(y))} \right) }{ (n+1 - \ell_n(y))^{(t+3)/4} } \frac{n} {c_1 \exp \left(\pi\sqrt{\frac{2n}{3}} \right)}\\
			< \frac{c_2}{c_1} n^{(t+3)/4}\exp \left(\pi \sqrt{\frac{2}{3}} \left(\sqrt{\frac{n}{2}}-\sqrt{n} \right) \right) \\
			< \frac{c_2}{c_1} (16y^2t^2)^{(t+3)/4} \exp \left(\pi {\frac{yt}{\sqrt{6}}} \left(\frac{1}{\sqrt{2}}-1 \right) \right).
		\end{multline*}
		Hence we get
		\[ 
		g_{n,k}(y)< \frac{c_2}{c_1} t^ky^k(16y^2t^2)^{(t+3)/4} \exp \left(\pi {\frac{yt}{\sqrt{6}}} \left(\frac{1}{\sqrt{2}}-1 \right)  \right).
		\]
		
		\noindent
		\textbf{Case 3}: $\ell_n(y) > n$:\\
		This is trivial because $g_{n,k}(y)=0$ here.
		
		\medskip
		\noindent
		Combining these three cases proves the result.
	\end{proof}

	We now define
	\begin{equation}
		\label{def:g_k}
		g_k(y)= \begin{cases}
			\kappa y^{k}e^{-\beta t y} & y\ge 0, \\
			0 & y < 0,
		\end{cases}
	\end{equation}
	where $\kappa=t^{(t+2+2k)/2} 2^{-3(t-1)/4} 3^{-(t-1)/4}$ and we recall that we have defined $\beta = \pi/ \sqrt{6}$. 
	
	\begin{lem}
		\label{lem:uniform_bound}
		For any $\delta>0$, there exists a polynomial $R_k(y)$ and $\theta>0$, such that 
		\begin{equation}
			|g_k(y)-g_{n,k}(y)|<n^{\delta-1/4} R_k(y)e^{-\theta y}. 
		\end{equation}
		In particular the sequence of functions $g_{n,k}$ converges uniformly to $g_k$ as $n\to\infty$. 
	\end{lem}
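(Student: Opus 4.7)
The plan is to compare $g_{n,k}(y)$ to $g_k(y)$ directly using the sharp asymptotics \eqref{asy_p} and \eqref{asy_d}, splitting the argument into two regimes at the cutoff $y = n^\eta$ for a small exponent $\eta \in (0, 1/4)$ to be chosen in terms of the given $\delta$.

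In the small-$y$ regime $0 < y \leq n^\eta$, I would substitute \eqref{asy_p} and \eqref{asy_d} into the definition of $g_{n,k}(y)$. A direct computation shows that the ratio of the leading constants in these two formulas simplifies to exactly $\kappa$, so the only remaining discrepancy between $g_{n,k}(y)$ and $g_k(y)$ comes from three factors: the exponential $\exp(\pi\sqrt{2/3}(\sqrt{n-\ell_n(y)}-\sqrt{n}))$, the ratio $n(n-\ell_n(y))^{-(t+3)/4}$, and the power $\ell_n(y)^k$. I would then Taylor-expand each of these around $\ell_n(y)/n = 0$. Using the identity $\sqrt{n-\ell_n(y)}-\sqrt{n} = -\ell_n(y)/(2\sqrt{n}) + O(\ell_n(y)^2/n^{3/2})$ together with $\ell_n(y) = yt\sqrt{n} + O(1)$, the exponent becomes $-\beta t y + O(y^2/\sqrt{n}) + O(1/\sqrt{n})$; similarly the second factor is $n^{-(t-1)/4}(1+O(y/\sqrt{n}))$ and the third is $(yt\sqrt{n})^k(1 + O(1/(y\sqrt{n})))$, with the trivial bound $\ell_n(y)^k = O(1)$ when $y < 1/\sqrt{n}$. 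Multiplying these expansions out gives, uniformly for $y \leq n^\eta$, the estimate $|g_k(y) - g_{n,k}(y)| \leq g_k(y) \cdot O(y^2/\sqrt{n} + 1/\sqrt{n})$. Choosing $\eta$ so that $2\eta - 1/2 \leq \delta - 1/4$ (i.e.\ $\eta \leq 1/8 + \delta/2$) and recalling $g_k(y) \leq \kappa y^k e^{-\beta t y}$ yields the required bound after absorbing the polynomial factor into $R_k$ and taking $\theta$ strictly smaller than $\beta t$.

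For the large-$y$ regime $y > n^\eta$, I would appeal directly to Lemma \ref{lem:gn-bound}, whose three cases together bound $g_{n,k}(y)$ by $P_1(y) e^{-\theta' y}$ for some polynomial $P_1$ and some $\theta' > 0$ independent of $n$; combined with the explicit decay $g_k(y) = \kappa y^k e^{-\beta t y}$, this gives $|g_k(y) - g_{n,k}(y)| \leq P(y) e^{-\theta' y}$ for some polynomial $P$. Picking $0 < \theta < \theta'$ and writing $e^{-\theta' y} = e^{-(\theta'-\theta)y}\, e^{-\theta y}$, the factor $P(y)\, e^{-(\theta'-\theta)y}$ is uniformly bounded in $y$ and in particular $\leq P(y)\, e^{-(\theta'-\theta)n^\eta}$, which tends to $0$ superpolynomially in $n$, hence is trivially $o(n^{\delta - 1/4})$. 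Combining the two regimes produces polynomials and constants $R_k$ and $\theta$ independent of $n$; the uniform convergence claim follows immediately since, for any $\delta < 1/4$, $n^{\delta - 1/4} R_k(y) e^{-\theta y} \to 0$ uniformly in $y$.

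The main obstacle I anticipate is the coordination of the two regimes around $y \sim n^{1/4}$: the Taylor-error $O(y^2/\sqrt{n})$ in the exponent inflates to $\Theta(1)$ exactly when $y$ approaches $n^{1/4}$, so $\eta$ must stay strictly below $1/4$, and the available slack is what governs how small $\delta$ can be taken. A secondary technical nuisance is that the $(1+o(1))$ error factors from \eqref{asy_p} and \eqref{asy_d} must themselves be controlled at a quantitative rate at least $O(n^{\delta-1/4})$ uniformly across $y \leq n^\eta$ (where $n - \ell_n(y)$ can be as small as $n^{1-\eta}$); this is standard but requires careful bookkeeping.
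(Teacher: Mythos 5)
Your proposal is correct and follows essentially the same route as the paper: the paper likewise combines the quantitative (Meinardus) error terms for $p(n)$ and $d_t(n)$ with binomial/Taylor expansions of the three discrepancy factors (the exponential, the power of $n-\ell_n(y)$, and $\ell_n(y)^k$) for moderate $y$, and falls back on the exponential bounds of Lemma~\ref{lem:gn-bound} for large $y$; the only differences are that the paper splits the domain at $\ell_n(y)=n/2$ rather than at $y=n^\eta$ and organizes the estimate as a telescoping chain of intermediate functions $h_n^{(0)},\dots,h_n^{(4)}$ rather than as a product of relative errors. The one point you defer to ``careful bookkeeping''---upgrading \eqref{asy_p} and \eqref{asy_d} to $1+O(n^{-1/2})$ and $1+O(n^{\delta-1/4})$ respectively---is exactly where the paper invokes Meinardus' theorem, and since it is the dominant error and the sole source of the $n^{\delta-1/4}$ rate in the statement, it should be promoted from caveat to central quantitative input.
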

	
	\begin{proof}
		As in the proof of Lemma~\ref{lem:gn-bound}, the analysis will depend on the value of $y$ relative to $n$. It will suffice to focus on nonnegative $y$. 
		
		\noindent
		\textbf{Case 1}: $n/2<\ell_n(y)$: \\
		Recall that $\ell_n(y)$ is defined in \eqref{def-ln}.
		By \eqref{eq:inequality_l_n(y)}, we obtain
		\begin{align*}
			g_k(y) < \frac{2}{n} \ell_n(y)\kappa y^ke^{-\beta ty} 
			< \frac{1}{\sqrt{n}} 4yt\kappa y^ke^{-\beta ty}.
		\end{align*}
		Using Lemma~\ref{lem:gn-bound}, we have $g_{n,k}(y) < Q_k(y)e^{-\theta y}$, and by using similar analysis we get 
		\begin{equation}
			g_{n,k}(y)< \frac{1}{\sqrt{n}} 4ytQ_k(y)e^{-\theta y}.
		\end{equation}
		The required result then follows by the triangle inequality.
		
		\noindent
		\textbf{Case 2}: $0 \le \ell_n(y) \le n/2$: \\
		We will apply the triangle inequality to successive differences of the following functions.
		\begin{align*}
			h_n^{(0)}(y)&=g_{n,k}(y),\\
			h_n^{(1)}(y)&= (yt)^k n^{(t-1)/4} \frac{d_t(n - \ell_n(y))}{p(n)},\\
			h_n^{(2)}(y) &=  \frac{\kappa y^k n^{(t+3)/4}}{(n-\ell_n(y))^{(t+3)/4}} \exp \left(-\pi\sqrt{\frac{2}{3}} \Big(\sqrt{n}-\sqrt{n - \ell_n(y)}\Big) \right),\\
			h_n^{(3)}(y) &= \kappa  y^k\exp \left(-\pi\sqrt{\frac{2}{3}} \Big(\sqrt{n}-\sqrt{n - \ell_n(y)}\Big) \right),\\
			h_n^{(4)}(y) &= g_k(y).
		\end{align*}
		From the proof of Lemma~\ref{lem:gn-bound}, it follows that $ n^{(t-1)/4} d_t(n - \ell_n(y))/p(n)$ is bounded by an exponential in $y$. Also $|(yt)^k-\ell_n(y)^k/n^{k/2}|$ is $O(n^{-1/2} \allowbreak Q_1(y))$ by expanding using binomial theorem, where $Q_1$ is some polynomial. This bounds $h_n^{(0)} - h_n^{(1)}$.
		
		To bound $h_n^{(2)}-h_n^{(1)}$, we will need the error terms in the asymptotics of $d_t(n)$ and $p(n)$.  From Meinardus' theorem~\cite[Theorem 6.2]{andrews-1976},
		\begin{align*}
			p(n) &=  \frac{\exp \left(\pi \sqrt{\frac{2n}{3}} \right)}{4n\sqrt{3}} \bigg(1+O(n^{-1/2}) \bigg), \\
			d_t(n) &= \frac{t^{(t+2)/2} \exp \left(\pi \sqrt{\frac{2n}{3}} \right)}{2^{(3t+5)/4} 3^{(t+1)/4}n^{(t+3)/4 } } \bigg(1+O(n^{\delta - 1/4}) \bigg),
		\end{align*}
		where $\delta$ is any positive real number. It is clear that
		\begin{equation*}
			\frac{1}{p(n)}= \frac{4n\sqrt{3}}{\exp \left(\pi \sqrt{\frac{2n}{3}} \right)}\left( 1+O(n^{-1/2}) \right).
		\end{equation*}
		Multiplying $d_t(n)$ by $1/p(n)$ above and comparing with $h_n^{(2)}$, we see that
		\begin{align*}
			|h_n^{(2)}(y)-h_n^{(1)}(y)| = O(n^{\delta-1/4}) h_2(y,n) = O(n^{\delta-1/4})Q_2(y)e^{-\theta y},
		\end{align*}
		where $Q_2$ and $\theta$ are chosen according to the proof of Lemma~\ref{lem:gn-bound}. 
		
		To obtain the required bound for $h_n^{(2)}-h_n^{(3)}$, we again use the binomial theorem for $(1 - \ell_n(y)/n)^{-(t+3)/4}$ (assuming $\ell_n(y)<n/2$).
		
		The bound for $h_n^{(3)}-h_n^{(4)}$ holds by noting that $|e^u-1|<2|u|$ for small enough $|u|$. Thus 
		\begin{multline*}
			\left| \exp \left(-\pi\sqrt{\frac{2}{3}} \left(\sqrt{n}-\sqrt{n - \ell_n(y)}\right) \right) -\exp(-\beta ty) \right|  \\
			< 2 e^{-\beta ty}\left|\pi\sqrt{\frac{2}{3}} \Big(\sqrt{n}-\sqrt{n - \ell_n(y)}\Big)- \beta ty \right| = O(n^{-1/2}) Q_3(y)e^{-\beta ty},
		\end{multline*}
		for large enough $n$, once again using the binomial theorem.
		This completes the proof.
	\end{proof}
	
	\begin{lem}
		\label{lem:conv_of_integal}
		For any natural number $k$, 
		\begin{equation}
			\lim_{n\to \infty} \int_{0}^{\infty} \;\text{d}y \; \psi_n(y)(g_k(y)-g_{n,k}(y)) =0.
		\end{equation}
	\end{lem}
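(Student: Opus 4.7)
My plan is to reduce the statement to proving a uniform-in-$n$ bound on $J_n := \int_0^\infty \psi_n(y) R_k(y) e^{-\theta y}\, dy$, and then establish that bound via Abel summation. Lemma~\ref{lem:uniform_bound} provides, for any $\delta > 0$, the pointwise estimate $|g_k(y) - g_{n,k}(y)| < n^{\delta-1/4} R_k(y) e^{-\theta y}$, so
\[
\left| \int_0^\infty \psi_n(y) (g_k(y) - g_{n,k}(y))\, dy \right| \leq n^{\delta - 1/4} J_n.
\]
If I can show $J_n = O(1)$ uniformly in $n$, then choosing any $\delta < 1/4$ finishes the proof.

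To bound $J_n$ uniformly, I first rewrite it as a sum. Setting $f(y) := R_k(y) e^{-\theta y}$ and $F_j := \int_{j/\sqrt{n}}^{(j+1)/\sqrt{n}} f(y)\, dy$, and noting that $\ell_n(y) = tj + \epsilon(n)$ is constant on $[j/\sqrt{n}, (j+1)/\sqrt{n})$, I obtain
\[
J_n = \frac{1}{n^{(t-3)/4}} \sum_{j \geq 0} c_t(tj + \epsilon(n))\, F_j.
\]
Next I apply Abel summation using the telescoping identity $c_t(m) = C_t(m) - C_t(m-t)$ (as invoked in the proof of Corollary~\ref{cor:numcore-size}). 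The boundary term vanishes because $F_N$ decays exponentially in $N$ at fixed $n$, while $C_t$ grows only polynomially. What remains is
\[
J_n = \frac{1}{n^{(t-3)/4}} \sum_{j \geq 0} C_t(tj + \epsilon(n)) (F_j - F_{j+1}).
\]

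By the mean value theorem $|F_j - F_{j+1}| = O(|f'(j/\sqrt{n})|/n)$, and $|f'|$ is again bounded by a polynomial times $e^{-\theta y}$. Plugging in the bound $C_t(m) = O(m^{(t-1)/2})$ coming from Theorem~\ref{thm:sum} and recognizing the resulting expression as a Riemann sum with mesh $1/\sqrt{n}$ in the variable $y = j/\sqrt{n}$, the powers of $n$ cancel and the sum converges to a finite integral, yielding $J_n = O(1)$ as required.

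The main obstacle is getting this uniform bound sharp enough. Applying the easier estimate $c_t(m) = O(m^{(t-2)/2})$ from Corollary~\ref{cor:numcore-size} term-by-term only yields $J_n = O(n^{1/4})$, which combined with the $n^{\delta-1/4}$ factor gives $O(n^\delta)$ and fails to tend to zero. Abel summation exploits the cancellations in the partial sums $C_t$, whose sharp growth of order $m^{(t-1)/2}$ is precisely what Theorem~\ref{thm:sum} supplies, to gain the critical extra factor of $n^{-1/4}$ needed for the argument to close.
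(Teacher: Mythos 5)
Your argument is correct, but it takes a genuinely different route from the paper's proof. The paper bounds $\psi_n$ pointwise: for $t\ge 4$ it invokes Anderson's bound $\numcore(m)=O(m^{(t-3)/2+\epsilon})$, for $t=3$ it uses the Legendre-symbol formula for $c_3$ together with the divisor bound, and for $t=2$ it runs a separate argument exploiting the sparsity of triangular numbers (an upper Riemann sum over the support of $\psi_n$, essentially a Jacobi theta estimate). You instead keep only the crude pointwise information and recover the missing factor of $n^{-1/4}$ by summation by parts, converting $\numcore$ into its partial sums $\coresum$ via Proposition~\ref{prop:coresum-formula} and then using the growth $\coresum(m)=O(m^{(t-1)/2})$ supplied by Theorem~\ref{thm:sum}; the power count $-(t-3)/4+(t-1)/4-1=-1/2$ against a Riemann sum of size $O(\sqrt{n})$ indeed gives $J_n=O(1)$, and the boundary term in the Abel summation vanishes since $F_N$ decays exponentially at fixed $n$ while $\coresum$ grows polynomially. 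Your diagnosis that the termwise bound $\numcore(m)=O(m^{(t-2)/2})$ from Corollary~\ref{cor:numcore-size} loses exactly a factor $n^{1/4}$ is also accurate. What your approach buys is uniformity and self-containedness: it treats all $t\ge 2$ at once, including the delicate $t=2$ case, and relies only on Theorem~\ref{thm:sum} (proved in the paper) rather than on external asymptotics for $\numcore$. Two small points to make explicit in a polished write-up: the reduction $\bigl|\int \psi_n(g_k-g_{n,k})\bigr|\le n^{\delta-1/4}J_n$ uses $\psi_n\ge 0$, and since $R_k$ and $\theta$ in Lemma~\ref{lem:uniform_bound} may depend on $\delta$, you should fix $\delta<1/4$ before constructing $J_n$; neither causes any difficulty.
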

	\begin{proof}
		For $t \geq 4$, \cite[Corollary 7]{anderson-2008} implies that 
		$\numcore(n) = O(n^{(t-3)/2+\epsilon})$ for any $\epsilon>0$.\footnote{Moreover, we can choose $\epsilon= 0$ for $t \geq 6$.}
		For $t=3$, we have from \cite[just before Lemma 3]{granville-ono-1996} that
		\[
		c_3(n) = \sum_{d | 3n+1} \left( \frac{d}{3} \right),
		\]
		where $\left( \frac{\cdot}{\cdot} \right)$ is the Legendre symbol. Therefore, $c_3(n) \leq d(3n+1)$, where $d(n)$ is the divisor function. A standard result in analytic number theory (see \cite[Section 13.11, Eq. (31)]{apostol-1976}, for example) says that $d(n) = o(n^{\epsilon})$ for any $\epsilon > 0$. Thus, for $t \geq 3$, we have 
		\[
		\psi_n(y)
		= \frac{\numcore(\ell_n(y))}{n^{(t-3)/4}}
		= O(n^{\epsilon}(1+y^r))
		\]
		for some integer $r$. The result then follows from Lemma~\ref{lem:uniform_bound} by choosing $\delta+\epsilon<1/4$, and noting that exponentially decaying functions have finite integral.
		
		Now let $t=2$. Then we have 
		\begin{equation}
			\psi_n (y)= \begin{cases}
				n^{1/4} & \ell_n(y) =\binom{i}{2} \text{ for $i\in \mathbb{N}$.} \\
				0 & \text{otherwise.}
			\end{cases}
		\end{equation}
		Therefore, $\psi_n(y)$ is supported at the intervals
		\begin{equation}
			\frac{\binom{i}{2}-\epsilon(n)}{t\sqrt{n}}\le y < \frac{\binom{i}{2}-\epsilon(n)}{t\sqrt{n}} + \frac{1}{\sqrt{n}}.
		\end{equation}
		From Lemma~\ref{lem:uniform_bound}, by choosing $0<\theta'<\theta$ and a large enough constant $B$, we may assume 
		\begin{equation}
			|g_k(y)-g_{n,k}(y)|<n^{\delta-\frac{1}{4}}Be^{-\theta' y}. 
		\end{equation}
		Thus the required integral is bounded by 
		\begin{equation}
			\begin{split}
				\int_{0}^{\infty} \;\text{d}y \; \psi_n(y)n^{\delta-1/4} Be^{-\theta'y} <&
				B \sum_{i=1}^{\infty} \frac{1}{\sqrt{n}}\left(n^{\delta} e^{-\theta' \frac{\binom{i}{2}-\epsilon(n)}{t\sqrt{n}}} \right)\\
				=& B n^{\delta-1/2} e^{\theta' \frac{\epsilon(n)}{t\sqrt{n}}}\sum_{i=1}^{\infty} \left(e^{-\frac{\theta'}{t\sqrt{n}} } \right)^{\binom{i}{2}},
			\end{split}
		\end{equation}
		where the first inequality comes from bounding the integral by an upper Riemann sum with intervals of length $n^{-1/2}$ supported precisely at the support of $\psi_n(y)$, and noting that $e^{-\theta'y}$ is a decreasing function.
		For us, a crude bound for the above sum will suffice, although $\sum_i q^{\binom{i}{2}} $ is a Jacobi theta function (in our case $q=e^{-\theta'/(t\sqrt{n}) }$) and therefore more sophisticated asymptotic results are available.
		Observe that there are at most $2n^{1/4}$ triangular numbers between $s\sqrt{n}$ and $(s+1)\sqrt{n}$ for any natural number $s$. Thus the above sum is bounded by
		\begin{align}
			B n^{\delta-1/2} e^{\frac{\epsilon(n)}{t\sqrt{n}}}\sum_{s=0}^{\infty} 2n^{1/4} \left( e^{-\frac{\theta'}{t\sqrt{n}} } \right)^{s\sqrt{n}} 
			=&  2 B n^{\delta-1/4} e^{\frac{\epsilon(n)}{t\sqrt{n}}} \frac{1}{1-e^{\theta'/t}}
		\end{align}
		which approaches zero as $n\to \infty$ for $\delta <1/4$.
	\end{proof}
	
	We will now prove convergence of the moments of $Y_n/\sqrt{n}$ to those of $Y$.
	
	\begin{lem}
		\label{lem:moments-conv}
		For all positive integer $k$, 
		\begin{equation}
			\lim_{n\to \infty}\mathbb{E} \left( \frac{Y_n}{\sqrt{n}} \right)^k = \left(\frac{\sqrt{6}}{\pi} \right)^k \prod_{i=1}^{k} \left(\frac{t-1}{2}+k-i \right).
		\end{equation}
	\end{lem}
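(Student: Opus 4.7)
The plan is to express $\mathbb{E}(Y_n^k/n^{k/2})$ as an integral of $\psi_n$ against $g_{n,k}$ (the functions $\psi_n$, $g_{n,k}$, $g_k$ have been set up for exactly this purpose), replace $g_{n,k}$ by the explicit limit $g_k$ via the preceding two lemmas, and then extract the limit using the global asymptotic for $\coresum$ from Theorem~\ref{thm:sum}.

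The first step is a bookkeeping identity. By Corollary~\ref{cor:number-fixed-cores} and the observation that $|\core_t(\lambda)| \equiv n \pmod{t}$, the nonzero terms in the expectation correspond to $j = \ell_n(m/\sqrt{n})$ for $m = 0,1,2,\dots$. Since $\psi_n$ and $g_{n,k}$ are constant on each interval $[m/\sqrt{n},(m+1)/\sqrt{n})$ of length $1/\sqrt{n}$, comparing exponents of $n$ in \eqref{def-psi-n} and \eqref{def-gnk} yields
\[
\mathbb{E}\!\left(\frac{Y_n^k}{n^{k/2}}\right) = \int_0^\infty \psi_n(y)\, g_{n,k}(y)\, dy.
\]
Splitting the integrand as $g_{n,k} = g_k + (g_{n,k}-g_k)$ and applying Lemma~\ref{lem:conv_of_integal}, the problem reduces to evaluating $\lim_n \int_0^\infty \psi_n(y)\, g_k(y)\, dy$.

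For this limit I set $\Psi_n(y) := \coresum(\ell_n(y))/n^{(t-1)/4}$. Since $\coresum(m)-\coresum(m-t) = \numcore(m)$ by Proposition~\ref{prop:coresum-formula}, the measure $\psi_n(y)\,dy$ and the step-measure $d\Psi_n$ assign the same total mass to each interval $[m/\sqrt{n},(m+1)/\sqrt{n})$; because $g_k$ is smooth with exponential decay, their integrals against $g_k$ differ by $o(1)$. Integration by parts (with vanishing boundary terms, since $g_k(0)=0$ for $k\geq 1$, $\Psi_n(0)\to 0$, and $g_k$ decays exponentially) then gives
\[
\int_0^\infty \psi_n\, g_k\, dy = -\int_0^\infty g_k'(y)\, \Psi_n(y)\, dy + o(1).
\]
Theorem~\ref{thm:sum} furnishes the pointwise limit $\Psi_n(y)\to \Psi_\infty(y) := \frac{(2\pi)^{(t-1)/2}}{t^{(t+2)/2}\,\Gamma((t+1)/2)}(ty)^{(t-1)/2}$, together with the uniform bound $\Psi_n(y) = O(y^{(t-1)/2})$, while $|g_k'|$ is dominated by a polynomial times $e^{-\beta t y}$. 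Dominated convergence and a further integration by parts give
\[
\lim_{n\to\infty} \int_0^\infty \psi_n\, g_k\, dy = \int_0^\infty g_k(y)\, \Psi_\infty'(y)\, dy.
\]

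The final integral is the standard gamma integral $\Gamma(k+(t-1)/2)/(\beta t)^{k+(t-1)/2}$ times an explicit constant; using $(t-1)\Gamma((t-1)/2) = 2\,\Gamma((t+1)/2)$ and $\beta = \pi/\sqrt{6}$, the factors of $t$, $2$, $3$, and $\pi$ inside $\kappa$ cancel exactly, leaving $(\sqrt{6}/\pi)^k \prod_{i=1}^k((t-1)/2+k-i)$ as claimed. The main obstacle is the passage to the limit in the previous paragraph: $\psi_n$ itself cannot be directly dominated, since $\numcore$ oscillates too wildly to admit any useful pointwise estimate; the remedy is to move to the cumulative sum $\coresum$ via the Stieltjes reformulation, where Theorem~\ref{thm:sum} supplies the smooth asymptotics that drive the whole argument.
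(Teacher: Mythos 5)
Your proposal is correct and follows essentially the same route as the paper: the identity $\mathbb{E}(Y_n/\sqrt{n})^k=\int_0^\infty\psi_n\,g_{n,k}\,dy$, the reduction to $\int\psi_n\,g_k\,dy$ via Lemma~\ref{lem:conv_of_integal}, integration by parts against the cumulative function $\int_0^y\psi_n = \coresum(\ell_n(y))/n^{(t-1)/4}+O(\numcore(\ell_n(y))/n^{(t-1)/4})$ (your $\Psi_n$), the asymptotics of Theorem~\ref{thm:sum}, and the gamma integral. The only cosmetic caveat is that your uniform bound should read $\Psi_n(y)=O\bigl((1+y)^{(t-1)/2}\bigr)$ rather than $O(y^{(t-1)/2})$ near $y=0$, which changes nothing since $g_k'$ decays exponentially.
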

	
	\begin{proof}
		From the definitions we get 
		\begin{align*}
			\mathbb{E}\left( \frac{Y_n}{\sqrt{n}} \right)^k =\sum_{i=0}^{\infty}\frac{i^k\mu_{n,t}(i)}{n^{k/2}},
		\end{align*}
		where $\mu_{n,t}$ is given in \eqref{def-mu}.
		Since $d_t(k)=0$ for all $k$ not divisible by $t$, we have
		\begin{align*}
			\mathbb{E}\left( \frac{Y_n}{\sqrt{n}} \right)^k &= \sum_{j=0}^{\infty} \frac{(jt+\epsilon(n))^k\mu_{n,t}(jt+\epsilon(n)) }{n^{k/2}}.
		\end{align*}
		Recall that
		$\psi_n$ and $g_{n,k}$ are defined in \eqref{def-psi-n} and \eqref{def-gnk} respectively. 
		Since $\psi_n\cdot g_{n,k}$ is a step function with step length $1/\sqrt{n}$, we are able to write this sum as the integral
		\begin{align*}
			\mathbb{E}\left( \frac{Y_n}{\sqrt{n}} \right)^k &=  \int_{0}^{\infty} \;\text{d}y \; \psi_n(y)g_{n,k}(y).
		\end{align*}
		It is useful to split the integral as
		\begin{equation*}
			\begin{split}
				\mathbb{E}\left( \frac{Y_n}{\sqrt{n}} \right)^k
				&= \int_{0}^{\infty} \;\text{d}y \; \psi_n(y)g_{k}(y) +\int_{0}^{\infty} \;\text{d}y \; \psi_n(y)(g_{n,k}(y)-g_n(k)),
			\end{split}
		\end{equation*}
		where $g_k$ is the explicit function in \eqref{def:g_k}.
		Using Lemma \ref{lem:conv_of_integal}, we know that the second term goes to zero as $n\to \infty$. Using integration by parts, the first term is
		\begin{align}
			\label{integ-by-parts}
			\int_{0}^{\infty} \;\text{d}y \; \psi_n(y)g_{k}(y) = - 
			\int_{0}^{\infty} \;\text{d}y \;g_k'(y) \int_{0}^{y} \;\text{d}u \; \psi_n(u).
		\end{align}
		Plugging in $\psi_n(u)=\numcore(\ell_n(u))/n^{(t-3)/4}$, we get
		\begin{align*}
			\int_{0}^{y} \;\text{d}u \; \psi_n(u) &= \sum_{i=0}^{\lfloor y\sqrt{n}\rfloor}\frac{\numcore(it+\epsilon(n))}{n^{(t-1)/4}}+ O\bigg(\frac{\numcore(\ell_n(y))}{n^{(t-1)/4}} \bigg).
		\end{align*}
		By Corollary~\ref{cor:numcore-size}, we know that $\numcore(n)=O (n^{(t-2)/2})$. Thus, using Proposition~\ref{prop:coresum-formula}, we get 
		\begin{align*}
			\int_{0}^{y} \;\text{d}u \; \psi_n(u) &=\frac{\coresum (\ell_n(y))}{n^{(t-1)/4}}+ O\bigg(\frac{(ty)^{(t-2)/2}}{n^{1/4}}\bigg)\\
			&=\frac{(2\pi)^{(t-1)/2}(yt)^{(t-1)/2} }{t^{(t+2)/2}\Gamma(\frac{t+1}{2})} + O\bigg(\frac{(ty)^{(t-2)/2}}{n^{1/4}} \bigg),
		\end{align*}
		where we have used Theorem \ref{thm:sum} for the last equality. 
		Note that $g_k$ is independent of $n$ and exponentially decaying, and the error term is uniformly bounded by a power of $y$.
		Thus, the error term, when integrated against $g'_k$, is $O(n^{-1/4})$. 
		Substituting this formula back in \eqref{integ-by-parts}, we obtain  
		\[ 
		\lim_{n\to\infty} \int_{0}^{\infty}\;\text{d}y \; \psi_n(y)g_{k}(y) =-\frac{(2\pi)^{(t-1)/2} }{t^{(t+2)/2}\Gamma(\frac{t+1}{2})}\int_{0}^{\infty} \;\text{d}y \; (yt)^{(t-1)/2} g_k'(y).
		\]
		The integrals are easily evaluated using the standard gamma integral, and after combining them, we obtain
		\begin{align*}
			\lim_{n\to \infty}\mathbb{E}\left( \frac{Y_n}{\sqrt{n}} \right)^k
			&= \frac{(2\pi)^{(t-1)/2} }{t^{(t+2)/2} \Gamma(\frac{t+1}{2})}\times \frac{ (t-1) \kappa}{2\beta^{k+(t-1)/2} t^{k}} \times \Gamma \left(k+\frac{t-1}{2} \right)\\
			&= \left(\frac{\sqrt{6}}{\pi} \right)^k \frac{\Gamma(\frac{t-1}{2}+k)}{\Gamma(\frac{t-1}{2})},
		\end{align*}
		where we have used that $\kappa=t^{(t+2+2k)/2} 2^{-3(t-1)/4} 3^{-(t-1)/4}$ from \eqref{def:g_k},
		proving the result.
	\end{proof}
	
	For the gamma distribution with shape parameter $\alpha$ and rate parameter $\beta$ whose density is given in \eqref{gamma-density}, the $k$'th moment $m_k$ can be calculated using the Gamma distribution to be
	\begin{equation}
		\label{gamma-moments}
		m_k = \frac{\Gamma(k + \alpha)}{\beta^k \Gamma(\alpha)}.
	\end{equation}
	For the relevant parameters of $\alpha$ and $\beta$ in our this, this is exactly the result in Lemma~\ref{lem:moments-conv}.

	\begin{proof}[Proof of Theorem~\ref{thm:conv}]
		From the formula for the moments of the gamma distribution in \eqref{gamma-moments}, one can verify that the moment generating function of the gamma distribution is
		\[
		\sum_{k=0}^\infty m_k \frac{t^k}{k!} = \left(1 - \frac{t}{\beta} \right)^{-\alpha}, \quad \text{for $t < \beta$},
		\]
		and therefore it is determined by its moments (see \cite[Theorem 30.1]{billingsley-1995}, for example). Using Lemma~\ref{lem:moments-conv}, we have shown that all the moments of $Y_n/\sqrt{n}$ exist and converge to those of $Y$, which is gamma distributed. Again appealing to a standard result on weak convergence~\cite[Theorem 30.2]{billingsley-1995}, we see that $Y_n/\sqrt{n}$ converges weakly to $Y$.
	\end{proof}

	\begin{proof}[Proof of Corollary~\ref{cor:expectation}]
		By Theorem~\ref{thm:conv}, $Y_n/\sqrt{n}$ converges in moments to $Y$. 
		We get the required constant $\mathbb{E}Y = \alpha/ \beta = (t-1)\sqrt{6}/(2\pi)$ by setting $k=1$ in \eqref{gamma-moments}. 
	\end{proof}

	\section{Hook lengths of random cells of random partitions}
	\label{sec:hooks}
	
	The main result of this section is that for large enough $n$, the modulo class of the hook length of a uniformly random cell of a uniformly random partition is approximately the uniform distribution on the modulo classes $\{0,1,\dots , t-1 \}$.

	\subsection{Consequences of Theorem~\ref{thm:conv}}
	In Corollary~\ref{cor:expectation} we have observed that the expected size of the $t$-core of a uniformly random partition of size $n$ is $O(\sqrt{n})$.  
	Therefore, Theorem~\ref{thm:partn-divn} suggests that most of the mass of a random partition lives in the corresponding $t$-divisible partition. 
	This idea allows us to obtain results about hook lengths of random partitions which might be difficult to obtain otherwise.
	
	\begin{prop}
		\label{prop:mod_zero_case}
		For a uniformly random cell $c$ of a uniformly random partition $\lambda$ of $n$, the probability that the hook length of $c$ in $\lambda$ is divisible by $t$ is $1/t+O(n^{-1/2})$.
	\end{prop}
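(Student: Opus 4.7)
The plan is to exploit a direct link between cells whose hook length is divisible by $t$ and the $t$-quotient of $\lambda$. Recall from Section~\ref{sec:core-quo} that the $k$-th part $\lambda^k$ of the $t$-quotient consists precisely of the cells of $\lambda$ whose hook length is divisible by $t$ and whose arm node has content congruent to $k \pmod t$. As $k$ runs over $\{0,1,\dots,t-1\}$, these cells partition the set of cells of $\lambda$ with hook length divisible by $t$. Writing $N(\lambda)$ for the number of such cells, this gives
\[
N(\lambda) = \sum_{k=0}^{t-1} |\lambda^k|.
\]
By Theorem~\ref{thm:partn-divn}, $|\lambda| = |\core_t(\lambda)| + t\sum_{k=0}^{t-1}|\lambda^k|$, hence
\[
N(\lambda) = \frac{|\lambda| - |\core_t(\lambda)|}{t}.
\]

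The second step is to compute the probability by averaging. Since $|\lambda| = n$ for every $\lambda \vdash n$, each cell is picked with joint probability $1/(n\, p(n))$, so
\[
\mathbb{P}\bigl(t \mid h_c \bigr)
= \frac{1}{n\, p(n)}\sum_{\lambda \vdash n} N(\lambda)
= \frac{1}{n\, p(n)}\sum_{\lambda \vdash n}\frac{n-|\core_t(\lambda)|}{t}
= \frac{1}{t} - \frac{\mathbb{E}(Y_n)}{t n}.
\]

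Finally, Corollary~\ref{cor:expectation} gives $\mathbb{E}(Y_n) \sim (t-1)\sqrt{6n}/(2\pi)$, so in particular $\mathbb{E}(Y_n) = O(n^{1/2})$ and therefore $\mathbb{E}(Y_n)/(tn) = O(n^{-1/2})$. Substituting into the displayed identity yields the claimed $1/t + O(n^{-1/2})$. There is no real obstacle here: the identity $N(\lambda)=(|\lambda|-|\core_t(\lambda)|)/t$ is immediate from the quotient/core decomposition, and the error estimate is built directly out of the expectation bound already established. The main conceptual point is simply that the hook-length statistic of interest is a linear function of $|\core_t(\lambda)|$, which reduces the problem to a first-moment asymptotic.
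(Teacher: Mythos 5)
Your proof is correct and follows essentially the same route as the paper: both rest on the identity $\#\{c\in\lambda : t\mid h_c\} = (n-|\core_t(\lambda)|)/t$, followed by averaging over cells and partitions and invoking Corollary~\ref{cor:expectation} to get the $O(n^{-1/2})$ error. The only difference is cosmetic: you derive the identity from the definition of the $t$-quotient together with the size formula in Theorem~\ref{thm:partn-divn}, whereas the paper cites the fact that removing a $t$-rim hook decreases the count of hooks divisible by $t$ by exactly one.
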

	
	\begin{proof}
		It is known that removing a $t$-rim hook reduces the number of hook lengths divisible by $t$ by exactly one; see \cite[Theorem 3.3]{olsson}, for example.\footnote{It is also possible to see this directly using the $t$-runner abacus.} 
		So, for any partition $\lambda\vdash n$
		\begin{equation}
			\#\{c\in \lambda \mid t\vert h_c \} = \frac{n-|\core_t(\lambda)|}{t}.
		\end{equation}
		Hence, the number of cells divisible by $t$ is completely determined by the size of the partition and its $t$-core. For a uniformly random cell $c$ of a uniformly random partition $\lambda$ of $n$, we find that the probability that the hook length of $c$ in $\lambda$ is divisible by $t$ tends to
		\begin{align*}
			&\lim_{n\to \infty} \frac{n-\mathbb{E}(|\core_t(\lambda)|)}{tn}
			= \frac{1}{t}-\lim_{n \to \infty}\frac{t-1}{2t\pi}\sqrt{\frac{6}{n}}
		\end{align*}
		as $n\to \infty$, completing the proof.
	\end{proof}

	\begin{prop}
		\label{prop:mod-pmi}
		Fix $r\in \{1,\dots, t-1 \}$. Then, for a uniformly random cell $c$ of a uniformly random partition $\lambda$ of $n$, the probability that the hook length of $c$ in $\lambda$ is congruent to either $\pm r \mod t$ is $2/t + O(n^{-1/2})$ if $r \ne t/2$ and $1/t + O(n^{-1/2})$ if $r=t/2$.
	\end{prop}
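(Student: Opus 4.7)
The plan is to extend the strategy of Proposition~\ref{prop:mod_zero_case}: writing $N_s(\lambda)$ for the number of cells in $\lambda$ with hook length congruent to $s$ modulo $t$, I aim to show
\[
N_r(\lambda) + N_{t-r}(\lambda) = \frac{2(n - |\core_t(\lambda)|)}{t} + O(|\core_t(\lambda)|)
\]
when $r \neq t/2$, and $N_{t/2}(\lambda) = (n - |\core_t(\lambda)|)/t + O(|\core_t(\lambda)|)$ when $r = t/2$. Taking expectations, dividing by $n$, and applying Corollary~\ref{cor:expectation}, which gives $\mathbb{E}|\core_t(\lambda)| = O(\sqrt{n})$, will then immediately yield the claim.

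The first step is to interpret $N_s$ via the $t$-runner abacus of $\lambda$. By Proposition~\ref{prop:core-facts}(2), a cell with hook $\equiv s \pmod{t}$ corresponds to a $(0,1)$-pair whose $0$-bit lies on some runner $a$ and whose $1$-bit lies on some runner $b$ with $b - a \equiv s \pmod{t}$, subject to the appropriate order-of-positions constraint in the underlying $1$-runner abacus. Grouping by unordered pair $\{a,b\}$ of runners, the combined contribution to $N_r + N_{t-r}$ (or to $N_{t/2}$ alone) from a pair with $|b - a| \in \{r, t-r\}$ equals the total number of cross-runner $(0,1)$-pairs from runners $a$ and $b$, counted with either runner carrying the $0$.

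The key computation is to interleave $w^a$ and $w^b$ into a single $1$-runner abacus, representing some auxiliary partition $\pi_{a,b}$. Counting cells of $\pi_{a,b}$ directly on the interleaved abacus gives $|\pi_{a,b}| = |\lambda^a| + |\lambda^b| + (\text{cross-runner count for } \{a,b\})$, whereas Theorem~\ref{thm:partn-divn} applied with $t = 2$ gives $|\pi_{a,b}| = |\core_2(\pi_{a,b})| + 2(|\lambda^a| + |\lambda^b|)$. Comparing, the cross-runner count equals $|\lambda^a| + |\lambda^b| + |\core_2(\pi_{a,b})|$; and by Theorem~\ref{thm:eq-sols} for $t=2$, $|\core_2(\pi_{a,b})|$ is a quadratic in $p_a - p_b$, where $p_i$ denotes the position of justification of the $i$-th runner of $\core_t(\lambda)$ as in Proposition~\ref{prop:t-runner}; in particular, $|\core_2(\pi_{a,b})| = O((p_a - p_b)^2) = O(|\core_t(\lambda)|)$. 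Summing over the qualifying pairs and using the observation that each index $i$ pairs with exactly $i + r$ and $i - r$ modulo $t$ among these---so each $|\lambda^i|$ contributes multiplicity two when $r \neq t/2$ and multiplicity one when $r = t/2$---together with the identity $\sum_i |\lambda^i| = (n - |\core_t(\lambda)|)/t$, then yields the two displayed formulas.

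The main technical obstacle is the interleaving step: the partition $\pi_{a,b}$ depends on the parity of $p_a + p_b$, since shifting a $1$-runner abacus by an odd amount swaps the two underlying runners, so the explicit form of $|\core_2(\pi_{a,b})|$ differs slightly between the two parity cases. Both forms, however, remain of order $(p_a - p_b)^2$, so the $O(|\core_t(\lambda)|)$ bound holds uniformly over the (constantly many) pairs $\{a, b\}$ appearing in the sum.
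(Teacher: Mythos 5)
Your proof is correct, but it takes a genuinely different route from the paper's. The paper argues dynamically: using Proposition~\ref{prop:core-facts}(3), a single $t$-rim-hook removal swaps a $(0,1)$ pair at distance $t$ on the $1$-runner abacus, and a local inspection of the $t-1$ intermediate positions shows that each removal decreases the count of cells with hook length $\equiv \pm r \pmod t$ by exactly $2$ (by exactly $1$ when $r = t/2$), while leaving all other residue counts of the surviving cells unchanged modulo $t$; iterating down to the core gives $N_r(\lambda)+N_{t-r}(\lambda) = 2(n-|\core_t(\lambda)|)/t + O(|\core_t(\lambda)|)$ directly, and the expectation step is then identical to Proposition~\ref{prop:mod_zero_case}. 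You instead perform a static global count on the $t$-runner abacus, pairing runners at distance $\pm r$, interleaving each pair into an auxiliary partition $\pi_{a,b}$, and extracting the cross-runner pair count from the $t=2$ case of Theorem~\ref{thm:partn-divn} together with Theorem~\ref{thm:eq-sols}; this costs more machinery (the order-preservation of the interleaving, the parity bookkeeping you flag, and the bound $(p_a-p_b)^2 = O(|\core_t(\lambda)|)$, which is exactly Proposition~\ref{prop:bound_b_lambda} and should be cited there) but yields an exact expression for the error term, namely $\sum_{\{a,b\}} |\core_2(\pi_{a,b})|$, rather than merely a bound. The paper's argument is shorter and self-contained at this point in the text; yours has the advantage of identifying precisely which cells account for the deviation from $2/t$.
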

	
	\begin{proof}
		By Proposition~\ref{prop:core-facts}(3), removing a $t$-rim hook from $\lambda$ corresponds to exchanging the positions of $w_i = 1$ and $w_{i+t} = 0$ in the corresponding 1-runner abacus $w$. The hook length of cells in $\lambda$ which do not involve either $i$ or $i+t$ are clearly unaffected. Moreover, the hook length modulo $t$ does not change for all cells $(i,j)$ if $j > i+t$ and $(k,i+t)$ if $k < i$. 
		
		Now consider the positions $w_j$ for $i < j < i+t$. If $w_j=0$, exchanging positions $i$ and $i+t$ removes a cell with hook length $j-i$ and if $w_j=1$ then a cell with hook length $t-(j-i)$.  In either case, if $2(j -i) \ne t$, the number of cells congruent to $\pm (j-i)$ modulo $t$ reduces by 1. Similarly, if $2(j-i)=t$, the number of cells congruent to $t/2$ modulo $t$ reduces by 1. We can now mimic the calculation in Proposition~\ref{prop:mod_zero_case} to prove the result.
	\end{proof}
	
	Using the results in Section~\ref{sec:distrib}, we are only able to prove the results in Propositions~\ref{prop:mod_zero_case} and \ref{prop:mod-pmi}. 
	To obtain the stronger result stated in Theorem~\ref{thm:random hook length}, we will need to use a natural action of the symmetric group $S_t$.
	
	\begin{example}
		Consider the partition $\lambda = (6,4,3,1)$ and let $t=4$. 
		\[
		\ytableausetup{centertableaux}
		\begin{ytableau}
			9 & 7 & 6 & 4 & 2 & 1 \\
			6 & 4 & 3 & 1 \\
			4 & 2 & 1\\
			1
		\end{ytableau}
		\]
		There are 5 (resp. 2) cells whose hook lengths are congruent to 1 (resp -1) modulo 4. Removing the 4-rim-hook corresponding to the cell (3,1) leads to the partition
		$\lambda' = (6,4)$.
		The following table illustrates how the hook lengths modulo 4 are affected.
		
		\begin{center}
			\begin{tabular}{|c|p{4cm}|p{4cm}|}
				\hline
				i & No. of hook lengths in $\lambda \equiv i \mod 4$ & No. of hook lengths in $\lambda' \equiv i \mod 4$ \\
				\hline
				0 & 3 & 2\\
				1 & 5 & 3 \\
				2 & 4 & 3 \\
				3 & 2 & 2 \\
				\hline
			\end{tabular}
		\end{center}
		\noindent
		Therefore, we have removed two cells congruent to 1 modulo 4, but none congruent to 3 modulo 4.
	\end{example}

	\subsection{A natural action of $S_t$}
	We define the action of $S_t$ on  $\mathcal{D}_t$, the set of $t$-divisible partitions, as follows. For any $\sigma\in S_t$ and $t$-divisible  partition $\nu$ with $t$-quotient $(\nu^0, \nu^1,\dots, \nu^{t-1})$, define $\sigma \nu $ be the $t$-divisible partition corresponding to the $t$-quotient $(\nu^{\sigma_0}, \nu^{\sigma_1},\dots ,\nu^{\sigma_{t-1}})$.
	Note that the above action preserves the size of the $t$-divisible partition by the last equality in Theorem \ref{thm:partn-divn}. 
	
	\begin{defn}
		The {\em $b$-smoothing} of a $t$-divisible partition $\nu$, denoted $C_\nu^b$, is the union of cells in the Young diagram of $\nu$ whose corresponding $(0,1)$ pairs are at least $(b+1)$ columns apart in the $t$-runner abacus of $\nu$.
	\end{defn}
	
	Note that the $(-1)$-smoothing of a a $t$-divisible partition $\nu$ is $\nu$ itself.
	
	\begin{example}
		\label{eg:action-of-S3}
		Consider the action of $S_3$ on the $3$-divisible partition $\nu =(7,3,2)$, whose 3-runner abacus is given by 
		\[
		\begin{matrix}
			\nu^0 = & (\dots, & 1, & 1, & 0, & \underline{0}, & 0, & 1, & 0, & \dots)\\
			\nu^1 = & (\dots, & 1, & 1, & 0, & 1, & 0, & 0, & 0, & \dots)\\
			\nu^2 = & (\dots, & 1, & 1, & 1, & 0, & 0, & 0, & 0, & \dots)
		\end{matrix}
		\]
		The orbit of $\nu$ under the action of $S_3$ and their $b$-smoothings for $0\le b \leq 2$ are described in the following table. 
		
		\def\arraystretch{1.2}
		\begin{center}
			\begin{tabular}{|c|p{2cm}|p{1cm}|p{1cm}|p{1cm}| }
				\hline
				$\sigma \in S_3$& $\sigma\nu\in \mathcal{D}_3$  & $C_{\sigma\nu}^0$ & $C_{\sigma\nu}^1$&$C_{\sigma\nu}^2$ \\
				\hline
				$123$ & $(7,3,2)$ & $(7,2)$ &$(4)$& $(2)$ \\
				$132$ & $(7,4,1) $ & $(7,2)$ & $(4)$&$(2)$ \\
				$213$ & $(8,2,2) $ & $(7,2)$ & $(4)$&$(2)$ \\
				$231$ & $(8,4) $ & $(7,2)$ & $(4)$&$(2)$ \\
				$312$ & $(9,2,1) $ & $(7,2)$ & $(4)$&$(2)$ \\
				$321$ & $(9,3) $ & $(7,2)$ & $(4)$&$(2)$ \\
				\hline
			\end{tabular}
		\end{center}
		\noindent
		We see that $C_{\sigma\nu}^b$ turns out to be a subpartition of $\sigma\nu$ for each $\sigma\in S_3$. Moreover, this example suggests that $C_{\sigma\nu}^b$ does not depend on $\sigma$. We prove these results in Proposition~\ref{prop:smoothing-subpartition} below.
	\end{example}
	
	\begin{rem}
		There are several interpretations of the $0$-smoothing of the partition. In Example~\ref{eg:action-of-S3}, note that the intersection of the cells in $\sigma \nu$ over all $\sigma \in S_3$ is precisely the Young diagram corresponding to the partition $C_{\sigma\nu}^0 = (7,2)$.
		Moreover, $(7,2)$ is denoted by the 3-runner abacus given below. 
		\[
		\begin{matrix}
			\bar{\nu}^0 = & (\dots, & 1, & 1, & 1, & \underline{1}, & 0, & 1, & 0, & \dots)\\
			\bar{\nu}^1 = & (\dots, & 1, & 1, & 0, & 0, & 0, & 0, & 0, & \dots)\\
			\bar{\nu}^2 = & (\dots, & 1, & 1, & 0, & 0, & 0, & 0, & 0, &\dots)
		\end{matrix}
		\]
		By comparing with the $3$-runner abacus of $\nu$, we see that $C_{\sigma\nu}^0$ has been obtained from $\nu$ by justifying all the $1$'s in $\nu$ upwards. Both these observations turn out to hold in general.
	\end{rem}
	
	\begin{prop}
		\label{prop:smoothing-subpartition}
		The set of cells $C_\nu^b$ is a connected subpartition of $\nu$. Moreover $C_\nu^b=C_{\sigma\nu}^b$ for all $\sigma\in S_t$ and $b \geq 0$.
	\end{prop}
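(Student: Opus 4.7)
The plan is to combine the $(0,1)$-pair description of cells from Proposition~\ref{prop:core-facts}(1) with the standard row–column dictionary for abaci: row $I$ of $\nu$ corresponds to the $I$-th largest $1$-position $q_I$ in the balanced $1$-runner abacus $w$ of $\nu$, and column $J$ corresponds to the $J$-th smallest $0$-position $p_J$ lying to the left of $q_1$. Writing each relevant pair as $p = u t + i$ and $q = v t + j$ with $0 \le i, j < t$, the corresponding cell lies in $C_\nu^b$ exactly when $v - u \ge b + 1$. A key preliminary observation is that for $b \ge 0$ this inequality already forces $v > u$, and hence $u t + i < v t + j$ automatically; so the order condition $p < q$ becomes vacuous, and membership in $C_\nu^b$ is cut out purely by column-level data of the $t$-runner.

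For the subpartition claim, ``$(I',J')$ lies weakly upper-left of $(I,J)$ in the Young diagram of $\nu$'' translates, via monotonicity of the $\lambda_I$ and $\lambda'_J$, into $q_{I'} \ge q_I$ and $p_{J'} \le p_J$. Monotonicity of $\lfloor \cdot /t \rfloor$ then yields
\[
\lfloor q_{I'}/t \rfloor - \lfloor p_{J'}/t \rfloor \;\ge\; \lfloor q_I/t \rfloor - \lfloor p_J/t \rfloor ,
\]
so if $(I,J) \in C_\nu^b$ then $(I',J') \in C_\nu^b$. Thus $C_\nu^b$ is upper-left closed inside $\nu$, which is exactly what it means to be a (necessarily connected) subpartition.

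For the $S_t$-invariance, I plan to derive a row-length formula for $C_\nu^b$ that depends only on the column profile of the $t$-runner. For each integer $u$, let $z_u$ (resp.\ $o_u = t - z_u$) denote the number of $0$s (resp.\ $1$s) among $\nu^0_u,\dots,\nu^{t-1}_u$; both sequences $(z_u)_u$ and $(o_u)_u$ are manifestly invariant under permutation of runners, since $\sigma \in S_t$ only reshuffles entries within each column. The $I$-th row of $C_\nu^b$ consists of the $0$-positions in $w$ whose $t$-runner column is at most $\lfloor q_I/t \rfloor - b - 1$, giving row length
\[
\sum_{u \le \lfloor q_I/t \rfloor - b - 1} z_u .
\]
The column index $\lfloor q_I/t \rfloor$ itself depends only on $(o_u)_u$: since every position in column $u+1$ exceeds every position in column $u$, listing $1$-positions in decreasing order puts all $o_{u+1}$ ones of column $u+1$ above the $o_u$ ones of column $u$, so $\lfloor q_I/t \rfloor$ is the unique $u$ with $\sum_{v > u} o_v < I \le \sum_{v \ge u} o_v$. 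Consequently every row length of $C_\nu^b$ is a function of $(z_u)_u$ alone, yielding $C_\nu^b = C_{\sigma\nu}^b$ for every $\sigma \in S_t$.

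The only real obstacle is bookkeeping: setting up the row/column dictionary for abaci and carefully confirming that the $p < q$ constraint becomes automatic as soon as $b \ge 0$, so that runner labels within a column genuinely drop out of the membership criterion for $C_\nu^b$. With that setup in hand, both claims reduce to two elementary facts --- monotonicity of $\lfloor \cdot /t \rfloor$ for the first, and invariance of the column profile $(z_u)_u$, $(o_u)_u$ under permutation of runners for the second.
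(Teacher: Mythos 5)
Your proof is correct and follows essentially the same route as the paper's: both reduce membership in $C_\nu^b$ to the column gap of the corresponding $(0,1)$ pair in the $t$-runner abacus, establish upper-left closedness via monotonicity of the $0$- and $1$-positions under $\lfloor\cdot/t\rfloor$, and derive the $S_t$-invariance from the fact that permuting runners only reshuffles entries within each column. Your explicit row-length formula in terms of the column profiles $(z_u)$, $(o_u)$ spells out in detail what the paper dispatches in one sentence, but it is the same underlying idea.
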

	
	\begin{proof}
		For any cell $c$ in the $t$-divisible partition $\nu$, we define the cell one place above and one place left by $c_U$ and $c_L$ respectively. It is enough to show that for any cell $c\in C_\nu^b$, $c_U, c_L \in C_\nu^b$. 
		By Proposition~\ref{prop:core-facts}(3), consider the pair $(\nu^a_i, \nu^b_j) = (0,1)$ pair (where $j > i+b$) corresponding to $c$ in the $t$-runner abacus of $\nu$.  Now, observe that $c_L$ is represented by the pair $(\nu^{a'}_{i'}, \nu^b_j) =(0,1)$ with either $i' < i$, or  $i' = i, a' < a$. Thus $c_L\in C_\nu^b$. A  similar argument ensures that $c_U \in C_\nu^b$.
		
		The proof of the second statement follows from the fact that the definition of the $b$-smoothing depends only on relative column positions and $\sigma$ only interchanges the rows.
	\end{proof}
	
	We illustrate Proposition~\ref{prop:smoothing-subpartition} by an example.
	
	\begin{example}
		Continuing Example~\ref{eg:action-of-S3}, let $c$ be the cell corresponding to the boxed $(0,1)$ pair
		\[
		\begin{matrix}
			\nu^0 = & (\dots, & 1, & 1, & 0, & \underline{0}, & 0, & 1, & 0, & \dots)\\
			\nu^1 = & (\dots, & 1, & 1, & \boxed{0}, & \boxed{1}, & 0, & 0, & 0, & \dots)\\
			\nu^2 = & (\dots, & 1, & 1, & 1, & 0, & 0, & 0, & 0, & \dots)
		\end{matrix}.
		\]
		Then $c_L$ and $c_U$ are given by the $(0,1)$ pairs with hats and tildes respectively,
		\[
		\begin{matrix}
			\nu^0 = & (\dots, & 1, & 1, & \hat{0}, & \underline{0}, & 0, & \tilde{1}, & 0, & \dots)\\
			\nu^1 = & (\dots, & 1, & 1, & \tilde{0}, & \hat{1}, & 0, & 0, & 0, & \dots)\\
			\nu^2 = & (\dots, & 1, & 1, & 1, & 0, & 0, & 0, & 0, & \dots)
		\end{matrix}.
		\]
	\end{example}
	
	For any cell $c\in C_\nu^b$, define $h_c^\nu$ to be the hook length of the cell $c$ in the partition $\nu$.
	
	\begin{lem} 
		\label{lem:hooklength_t-div}
		Let $\nu$ be a uniformly random $t$-divisible partition of $n$ and $b \geq 0$.
		For a uniformly random cell $c\in C_\nu^b$, the probability that the hook length $h_c^\nu$  is congruent to $i$ modulo $t$, where $i \neq 0$, is independent of $i$.
	\end{lem}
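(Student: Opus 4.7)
The plan is to exploit the $S_t$ action on $\mathcal{D}_t$ together with the invariance $C_{\sigma\nu}^b = C_\nu^b$ from Proposition~\ref{prop:smoothing-subpartition}. The starting observation is that if $c \in C_\nu^b$ corresponds via Proposition~\ref{prop:core-facts}(1) to the $(0,1)$ pair at rows $a, a'$ and columns $i, i'$ in the $t$-runner abacus of $\nu$, then the underlying $1$-runner positions are $ti + a$ and $ti' + a'$, so the hook length is $h_c^\nu = t(i' - i) + (a' - a)$. Hence $h_c^\nu \bmod t$ depends only on the row difference $a' - a$.

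Next, I would track the effect of the $S_t$ action. Since $(\sigma\nu)^k = \nu^{\sigma(k)}$, the same cell $c$ (viewed in $C_{\sigma\nu}^b = C_\nu^b$) corresponds in the $t$-abacus of $\sigma\nu$ to a $(0,1)$ pair at rows $(\sigma^{-1}(a), \sigma^{-1}(a'))$ with the same columns $(i, i')$, so
\begin{equation*}
h_c^{\sigma\nu} \equiv \sigma^{-1}(a') - \sigma^{-1}(a) \pmod t.
\end{equation*}

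I would then group the sum defining the desired probability by $S_t$-orbits in $\mathcal{D}_t(n)$. Since $|C_\mu^b|$ is constant on each orbit $\mathcal{O}$, it suffices to show that $\sum_{\mu \in \mathcal{O}} \#\{c \in C_\mu^b : h_c^\mu \equiv i \pmod t\}$ is independent of $i \in \{1, \ldots, t-1\}$. Picking a representative $\nu \in \mathcal{O}$ and re-indexing via $S_t$ (each orbit element being counted $|\mathrm{Stab}(\nu)|$ times), this sum is proportional to
\begin{equation*}
\sum_{c \in C_\nu^b} \#\{\sigma \in S_t : \sigma^{-1}(a'_c) - \sigma^{-1}(a_c) \equiv i \pmod t\},
\end{equation*}
where $(a_c, a'_c)$ are the runners of $c$ in $\nu$. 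Cells with $a_c = a'_c$ (whose hook length is automatically divisible by $t$) contribute $0$ for $i \neq 0$. For cells with $a_c \neq a'_c$: among the $t(t-1)$ ordered pairs of distinct elements of $\{0, \ldots, t-1\}$, exactly $t$ satisfy $y - x \equiv i \pmod t$ for each $i \not\equiv 0$, and each such pair arises from $(t-2)!$ permutations $\sigma$. The total is $t \cdot (t-2)! = t!/(t-1)$, independent of $i$, so summing over orbits proves the lemma.

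The main obstacle is justifying the identification used in the second paragraph: a cell $c$ in the common Young diagram $C_\nu^b = C_{\sigma\nu}^b$ corresponds to a $(0,1)$ pair at rows $(a,a')$ of $\nu$'s abacus and at rows $(\sigma^{-1}(a), \sigma^{-1}(a'))$ of $\sigma\nu$'s abacus, with the same columns $(i,i')$ in both. Proving this requires showing that the bijection of Proposition~\ref{prop:core-facts}(1), restricted to $C_\nu^b$, is compatible with the row-permutation $\sigma$; this should follow from the column-based definition of the $b$-smoothing and a careful analysis of the proof of Proposition~\ref{prop:smoothing-subpartition}.
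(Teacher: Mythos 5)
Your proposal is correct and takes essentially the same route as the paper: decompose $\mathcal{D}_t(n)$ into $S_t$-orbits, note that $h_c^\nu \bmod t$ depends only on the pair of runner indices of the corresponding $(0,1)$ pair, and observe that permuting runners distributes each off-diagonal index pair equally over all nonzero residues (your $t\cdot(t-2)!$ count is the explicit version of the paper's ``an equal number of times''). The ``main obstacle'' you flag at the end is not actually needed: the argument only requires the bijection $(\nu^\ell_j,\nu^m_k)\mapsto(\nu^{\sigma(\ell)}_j,\nu^{\sigma(m)}_k)$ between the $(0,1)$ pairs defining $C_\nu^b$ and those defining $C_{\sigma\nu}^b$ (which preserves columns by construction, hence the smoothing condition, and conjugates the rows by $\sigma$), together with $|C_\nu^b|=|C_{\sigma\nu}^b|$ --- no literal identification of cells in the two Young diagrams is used, and this is exactly how the paper's proof proceeds.
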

	
	\begin{proof}
		Fix  $\nu\in \mathcal{D}_t$ and $b\in \mathbb{N}$.
		For any $\sigma\in S_t$, there is a natural bijection from the cells of $C_\nu^b$ to those in $C_{\sigma\nu}^b$, defined by mapping the appropriate $(0,1)$ pairs $(\nu^\ell_j, \nu^m_k) \mapsto (\nu^{\sigma(\ell)}_j, \nu^{\sigma(m)}_k)$.
		Note that the hook length of the cell corresponding to the pair $(\nu^\ell_j, \nu^m_k)$ is congruent to $(\ell-m)$ mod $t$. Thus if $\ell \neq m$, the group action of $S_t$ takes the hook length of this pair to all possible nonzero hook lengths modulo $t$ an equal number of times.
		
		Now consider the set of all $t$-divisible partitions of $n$. Under the action of $S_t$, this set will be partitioned into disjoint orbits. By the argument above, the number of cells with hook lengths congruent to any nonzero modulo class of $t$ is the same within any orbit. Therefore, we obtain our desired result.
	\end{proof}

	\begin{defn}
		\label{def:action of S_t}
		We define the action of $\sigma \in S_t$ on $\mathcal{P}$ by 
		$\sigma \lambda = \Delta_t^{-1}(\rho, \allowbreak \sigma \nu)$,
		where $\Delta_t (\lambda) =(\rho, \nu)$ is defined in Theorem~\ref{thm:partn-divn}. 
	\end{defn}
	
	\begin{rem}
		\label{rem:Shift_description}
		We give an alternate description of $\sigma \lambda$ from Definition~\ref{def:action of S_t} using Remark~\ref{rem:t-runner_shift}.
		Let the core $\rho$ be determined by its positions of justification
		$(p_0,\dots, p_{t-1})$ as explained in Proposition~\ref{prop:t-runner}(1).
		Then $\sigma \lambda$ is the partition corresponding to 
		the $t$-runner
		\[
		(\delta ^{p_{\sigma(0)}-p_0} \lambda^{\sigma(0)},\delta ^{p_{\sigma(1)}-p_1} \lambda^{\sigma(1)}, \dots ,\delta ^{p_{\sigma(t-1)}-p_{t-1}} \lambda^{\sigma(t-1)} ),
		\]
		where $(\lambda^0, \dots, \lambda^{t-1})$ is the $t$-runner abacus for $\lambda$.
	\end{rem}

	Let $\lambda \in \mathcal{P}$ with $\Delta_t (\lambda) =(\rho, \nu)$, and $\rho$ be determined by  the $t$-tuple $(p_0,\dots, p_{t-1})$ and let $b_\lambda = \max_{1 \leq i < j \leq t-1} |p_i-p_j|$.
	We will denote $C_\nu^{b_\lambda}$ by $C_\lambda$ for brevity and call it the {\em canonical smoothing} of the $t$-quotient of $\lambda$. We illustrate $C_\lambda$ with an example.
	
	\begin{example}
		\label{eg:lambda-running}
		Let $\lambda=(10,3)$. Then $\Delta_3 (\lambda) =(\rho, \nu)$, where $\nu= (7,3,2)$ and $\rho=(1)$. The 3-runner abacus of $\rho$ is given by
		\[
		\begin{matrix}
			\rho^0 = & (\dots, & 1, & 1, & 1, & \underline{1}, & 0, & 0, & 0, & \dots)\\
			\rho^1 = & (\dots, & 1, & 1, & 1, & 0, & 0, & 0, & 0, & \dots)\\
			\rho^2 = & (\dots, & 1, & 1, & 0, & 0, & 0, & 0, & 0, & \dots)
		\end{matrix},
		\] 
		which implies $p_0=1,p_1=0, p_2=-1$ and $b_\lambda=2$. Therefore $C_\lambda=C_\nu^2=(2)$ from Example \ref{eg:action-of-S3}.
		The 3-runner abacus of $\nu$, with the cells $c_1, c_2$ of the canonical smoothing $C_\lambda \subset \nu$ marked by hats and tildes respectively, is given by
		\[
		\begin{matrix}
			\nu^0 = & (\dots, & 1, & 1, & \hat 0, & \underline{0}, & 0, & \tilde{\hat 1}, & 0, & \dots)\\
			\nu^1 = & (\dots, & 1, & 1, & \tilde 0, & 1, & 0, & 0, & 0, & \dots)\\
			\nu^2 = & (\dots, & 1, & 1, & 1, & 0, & 0, & 0, & 0, & \dots)
		\end{matrix}.
		\] 
		These cells are shown below in the Young diagram of $\nu$:
		\[
		\ytableausetup{centertableaux}
		\begin{ytableau}
			c_1 & c_2 &  & & & &\\
			& & \\
			& 
		\end{ytableau}.
		\]
		From Remark~\ref{rem:t-runner_shift}, the $t$-runner abacus for $\lambda$ is given by $\lambda^i=\delta^{-p_i}\nu^i$. 
		Therefore, the 3-runner of $\lambda=(10,3)$ is given by 
		\[
		\begin{matrix}
			\lambda^0 = & (\dots, & 1, & 1, & 1, & \underline{0}, & 0, & 0, & 1, & \dots)\\
			\lambda^1 = & (\dots, & 1, & 1, & 0, & 1, & 0, & 0, & 0, & \dots)\\
			\lambda^2 = & (\dots, & 1, & 1, & 0, & 0, & 0, & 0, & 0, & \dots)
		\end{matrix},
		\] 
		which can be easily verified by direct computation.
	\end{example}

	\begin{prop}
		\label{prop:injection_of_cells}
		Let $\lambda$ be a partition with $\Delta_t (\lambda) =(\rho, \nu)$.
		Then there exists an injective map $\phi$ that takes the cells in $C_\lambda$ to the cells in $\lambda$ such that for any cell $c\in C_\lambda$, the hook lengths
		$h^\lambda_{\phi(c)}\equiv h^\nu_c \mod t$.
	\end{prop}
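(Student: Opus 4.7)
The plan is to exploit the explicit description of the bijection $\Delta_t$ from Remark~\ref{rem:t-runner_shift}. Since $\nu^i = \delta^{p_i}\lambda^i$, the bit at position $j$ on runner $i$ of $\nu$ is literally the same bit as the one at position $j+p_i$ on runner $i$ of $\lambda$. This gives a canonical shift between the two $t$-runner abaci, and I will use it to directly transport the $(0,1)$-pairs representing cells of $C_\lambda \subset \nu$ to $(0,1)$-pairs representing cells of $\lambda$.

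First I would define $\phi$ as follows. For a cell $c \in C_\lambda = C_\nu^{b_\lambda}$ corresponding to the pair $(\nu^\ell_j, \nu^m_k) = (0,1)$ in the $t$-runner abacus of $\nu$ (where, by definition of $C_\nu^{b_\lambda}$, we have $k - j \geq b_\lambda + 1$), let $\phi(c)$ be the cell of $\lambda$ corresponding to the pair $(\lambda^\ell_{j+p_\ell},\, \lambda^m_{k+p_m})$. The shift identity $\nu^i = \delta^{p_i}\lambda^i$ guarantees these entries are still a $0$ and a $1$, respectively. To check $\phi(c)$ is a legitimate cell of $\lambda$, I must verify that the $0$ strictly precedes the $1$ in the flat abacus of $\lambda$, i.e., $t(j+p_\ell)+\ell < t(k+p_m)+m$. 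This rearranges to
\[
t\bigl(k-j+p_m-p_\ell\bigr) > \ell - m,
\]
and since $k - j \geq b_\lambda + 1$ while $|p_m - p_\ell| \leq b_\lambda$ by the very definition of $b_\lambda$, the left side is at least $t$ whereas the right side is strictly less than $t$ in absolute value. This is the single step where the choice $b = b_\lambda$ is essential.

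Next, the hook-length congruence is a direct calculation in the flat abacus by Proposition~\ref{prop:core-facts}(2): one has
\[
h^\nu_c = t(k-j) + (m-\ell), \qquad h^\lambda_{\phi(c)} = t(k-j) + t(p_m - p_\ell) + (m-\ell),
\]
so $h^\lambda_{\phi(c)} - h^\nu_c = t(p_m - p_\ell) \equiv 0 \pmod{t}$. Injectivity of $\phi$ is then immediate: the runner-wise shifts $j \mapsto j+p_i$ are bijections, so distinct $(0,1)$-pairs in $\nu$'s $t$-runner abacus produce distinct $(0,1)$-pairs in $\lambda$'s $t$-runner abacus, and these correspond to distinct cells of $\lambda$ by Proposition~\ref{prop:core-facts}(1).

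The main obstacle is, frankly, purely notational: one must carefully juggle cell-language, runner-position language, and flat-abacus language simultaneously for two partitions whose abaci differ by the shifts $(p_0, \dots, p_{t-1})$. The conceptual content is clean: $b_\lambda$ is defined precisely so that the shifted pair remains $0$-before-$1$, and shifting runner $i$ by $p_i$ only changes the resulting hook length by the multiple $t(p_m - p_\ell)$ of $t$.
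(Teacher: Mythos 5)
Your proof is correct and follows essentially the same route as the paper: you define $\phi$ by shifting the $(0,1)$-pair on runners $\ell$ and $m$ by the justification positions $p_\ell$ and $p_m$ (exactly the paper's $\phi$), use $k-j \geq b_\lambda + 1 \geq |p_m - p_\ell| + 1$ to see the shifted pair is still a valid cell, and observe that the hook length changes by the multiple $t(p_m-p_\ell)$ of $t$. Your writeup is in fact somewhat more explicit than the paper's in the flat-abacus verification, but the content is the same.
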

	
	\begin{proof}
		Let $c \in C_\lambda\subset \nu$ be given by the pair $(\nu^a_i, \nu^b_j) =  (0,1)$ where $j-i > b_\lambda$ by definition. Let $\rho$ be determined by the justification positions $(p_0,\dots ,p_{t-1})$, where $\sum_{i=0}^{t-1}p_i=0$ by Proposition \ref{prop:t-runner}(1). Then define 
		\[
		\phi(c) := (\nu^a_{i+p_a}, \nu^b_{j+p_b}).
		\]
		By Remark~\ref{rem:t-runner_shift}, $\nu^a_{i+p_a} = 0$ and $\nu^b_{j+p_b} = 1$ so that this pair corresponds to a cell in $\lambda$. The map $\phi$ is clearly well-defined since $j+p_b>i+p_a$ and is moreover injective. Since the modulo class of the hook length of a cell is purely determined by the rows of the corresponding $(0,1)$ pair, the result follows.
	\end{proof}
	
	\begin{example}
		Consider $\phi$ defined in the proof of Proposition~\ref{prop:injection_of_cells}.
		Continuing our running example of $\lambda = (10,3)$ in Example~\ref{eg:lambda-running}, the cells $f_1=\phi(c_1)$ and $f_2=\phi(c_2)$ are shown in the Young diagram of $\lambda$ below. 
		\[
		\ytableausetup{centertableaux}
		\begin{ytableau}
			f_2&  &  f_1& & & & & & &\\
			& & 
		\end{ytableau}
		\]
		Finally, note that $h^\nu_{c_1}=h^\lambda_{f_1}=9$ and $h_{f_2}^\lambda=11 \equiv 8=h_{c_2}^\nu \mod{3}$.
	\end{example}

	\subsection{Proof of the main theorem}
	
	The main idea is to show that the image of the canonical smoothing $\phi(C_\lambda)$ comprise of most of the cells in a uniformly random partition $\lambda$ of size $n$. 
	We will first show that $|\nu| -|C_\lambda|$ has very few number of cells in expectation.
	
	\begin{prop}\label{prop:bound_b_lambda}
		For any partition $\lambda$ with $\Delta_t (\lambda) =(\rho, \nu)$, 
		$|b_\lambda| \leq 2\sqrt{|\rho|}$.
	\end{prop}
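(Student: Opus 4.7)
The plan is to work entirely in the balanced $1$-runner abacus $w$ of the $t$-core $\rho$, interpreting cells as inversions of $w$ via Proposition~\ref{prop:core-facts}(1). Proposition~\ref{prop:t-runner}(1) tells me that each runner $i$ of the $t$-runner abacus of $\rho$ is justified at $p_i$ and that $\sum_{i=0}^{t-1} p_i=0$. This is exactly what lets me balance positive against negative coordinates.

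Set $S := \sum_{i\,:\, p_i>0} p_i = -\sum_{i\,:\, p_i<0} p_i$ (the equality using $\sum_i p_i = 0$). The first step is the observation that in $w$ there are exactly $S$ many $1$'s at positions $\ge 0$: one at each combined position $nt+i$ with $p_i>0$ and $0\le n<p_i$. Symmetrically, there are exactly $S$ many $0$'s at positions $<0$, one at each position $nt+i$ with $p_i<0$ and $p_i\le n<0$. Since every such $0$ sits strictly to the left of every such $1$, each of the $S\cdot S=S^2$ resulting pairs is a valid inversion, hence a distinct cell of $\rho$ by Proposition~\ref{prop:core-facts}(1). This yields $|\rho|\ge S^2$.

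The second step is the elementary bound $b_\lambda=\max_i p_i-\min_i p_i\le 2S$. Indeed, $\max_i p_i$ is a single nonnegative term in $\sum_{i\,:\, p_i>0} p_i=S$, hence $\max_i p_i\le S$, and symmetrically $-\min_i p_i\le S$. Combining with the previous step gives $b_\lambda\le 2S\le 2\sqrt{|\rho|}$, which is the desired inequality. The degenerate case $|\rho|=0$ forces $p=(0,\dots,0)$ (as used already in the base case in the proof of Theorem~\ref{thm:eq-sols}), so $b_\lambda=0$ and the bound holds trivially.

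The only delicate point is the positional bookkeeping when passing between the $t$-runner and $1$-runner pictures: one has to keep straight that row $n$ on runner $i$ corresponds to position $nt+i$ in the combined abacus, which is $\ge 0$ precisely when $n\ge 0$, and that the $S^2$ inversions I single out are automatically pairwise distinct because they are indexed by independent choices of a $0$ from one pool and a $1$ from the other. After that, the rest is elementary. I expect this to be far cleaner than trying to bound $|p_a-p_b|$ by manipulating the quadratic form $F_t(p)$ in Remark~\ref{rem:sphere}, where the additive constant $\frac{t^2-1}{24}$ and the shift by $c_i=\frac{t-1-2i}{2t}$ make the inequality delicate for small $|\rho|$.
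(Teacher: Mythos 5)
Your proof is correct, and it reaches the bound by a slightly different counting than the paper's. Both arguments have the same skeleton --- lower-bound $|\rho|$ quadratically in $b_\lambda$ by exhibiting enough $(0,1)$ inversions in the abacus of the core --- but the inversion sets differ. The paper restricts attention to the two extreme runners $i,j$ realizing $b_\lambda = p_j - p_i$ and counts the inversions between them, getting $|\rho| \geq \binom{b_\lambda}{2}$; this forces a small case analysis (one needs $b_\lambda \geq 2$ for nonempty $\rho$, which follows from $\sum_i p_i = 0$, to convert $\binom{b_\lambda}{2}$ into $b_\lambda^2/4$). You instead use all runners but only the inversions straddling position $0$, getting $|\rho| \geq S^2$ with $S = \sum_{p_i>0} p_i$, and then $b_\lambda \leq \max_i p_i - \min_i p_i \leq 2S$ since each of $\max_i p_i$ and $-\min_i p_i$ is a single term of $S$. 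Your bookkeeping checks out: runner $i$ contributes exactly $\max(p_i,0)$ ones at nonnegative combined positions and $\max(-p_i,0)$ zeros at negative ones, and the $S^2$ pairs are automatically distinct inversions. The payoff of your version is that the degenerate cases disappear (if $S=0$ then all $p_i=0$ and $b_\lambda=0$) and the constant $2$ falls out of $2S \leq 2\sqrt{S^2}$ with no slack analysis; the paper's version is marginally shorter because it never needs the balance condition explicitly. One cosmetic point: the paper defines $b_\lambda = \max_{1\le i<j\le t-1}|p_i-p_j|$, which is at most your $\max_i p_i - \min_i p_i$, so your bound covers it either way.
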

	
	\begin{proof}
		The inequality clearly holds if $\rho = \emptyset$.
		Let $i,j \in \{0,\dots,t-1\}$ such that $b_\lambda = p_j - p_i$. Consider only the $(0,1)$ pairs in rows $i$ and $j$ of the $t$-runner abacus of $\rho$. Clearly, the number of such pairs is at least $\binom{b_\lambda}{2}$, and this is a lower bound for $|\rho|$. Now, since $b_\lambda \geq 2$ for nonempty $\rho$, the desired inequality follows.
	\end{proof}
	
	We will now estimate the number of cells of the region with small hook lengths. 
	
	\begin{lem}
		\label{lem:small-hooks}
		For $\lambda\vdash n$ and an integer $m$, the cardinality of the set $\mathcal{B}=\{c\in \lambda \mid h_c< m \}$ is less than $m\sqrt{2n}$.
	\end{lem}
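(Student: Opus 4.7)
The plan is to reduce the problem to a quadratic count on the abacus side. By Proposition~\ref{prop:core-facts}(1)--(2), the cells of $\lambda$ are in bijection with pairs of integers $(i,j)$ with $i<j$, $w_i=0$, and $w_j=1$ in the balanced abacus $w$ of $\lambda$, and the hook length of the cell attached to $(i,j)$ is exactly $j-i$. Writing
\[
N_h \;:=\; \#\{i \in \mathbb{Z} : w_i = 0,\ w_{i+h}=1\}
\]
for $h \geq 1$, this is precisely the number of cells of $\lambda$ with hook length $h$; since hook lengths are positive integers, $|\mathcal{B}| = \sum_{h=1}^{m-1} N_h$.

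The heart of the argument is the bound $N_h < \sqrt{2n}$ for every $h \geq 1$. List the positions contributing to $N_h$ as $p_1 < p_2 < \cdots < p_{N_h}$, so $w_{p_k}=0$ and $w_{p_k+h}=1$ for each $k$. For any pair of indices $(k,l)$ with $1 \leq k \leq l \leq N_h$, the pair of integers $(p_k, p_l + h)$ satisfies $w_{p_k}=0$, $w_{p_l+h}=1$, and $p_k \leq p_l < p_l + h$, and so by Proposition~\ref{prop:core-facts}(1) it corresponds to a cell of $\lambda$. The assignment $(k,l) \mapsto (p_k, p_l + h)$ is plainly injective, since its first coordinate recovers $k$ and its second recovers $l$. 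Thus $\lambda$ contains at least $\binom{N_h+1}{2} = N_h(N_h+1)/2$ cells, which gives $N_h^2 < N_h(N_h+1) \leq 2n$ and hence $N_h < \sqrt{2n}$.

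Summing now yields
\[
|\mathcal{B}| \;=\; \sum_{h=1}^{m-1} N_h \;<\; (m-1)\sqrt{2n} \;\leq\; m\sqrt{2n},
\]
which is the desired inequality. The only mildly clever step is the choice of quadratic injection: indexing over all ordered pairs $1 \leq k \leq l \leq N_h$, rather than, say, a matching or a single sequence, is what turns the linear quantity $N_h$ into a quadratic count of distinct cells, upgrading a trivial linear bound to one of order $\sqrt{n}$.
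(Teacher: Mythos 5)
Your proof is correct. The injection $(k,l)\mapsto(p_k,\,p_l+h)$ for $1\le k\le l\le N_h$ is valid: each image pair is a $(0,1)$ pair of the abacus with the first index strictly smaller, hence a genuine cell of $\lambda$ by Proposition~\ref{prop:core-facts}(1), and the first and second coordinates recover $k$ and $l$ respectively, so $n\ge\binom{N_h+1}{2}$ and $N_h<\sqrt{2n}$ follows.

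Your route is genuinely different from the paper's, though both rest on the same quadratic-counting idea. The paper works directly on the Young diagram: it injects \emph{unordered pairs of distinct cells of $\mathcal{B}$ itself} into $\lambda\times[m]\times[m]$ by sending $\{c_1,c_2\}$ to the cell at the intersection of the column of $c_1$ and the row of $c_2$, decorated with the leg length of $c_1$ and arm length of $c_2$ (both $<m$); this gives $\binom{|\mathcal{B}|}{2}\le nm^2$ in one stroke. You instead pass to the abacus, stratify $\mathcal{B}$ by the exact hook length $h$, prove the clean standalone fact that at most $\sqrt{2n}$ cells of any partition of $n$ share a given hook length, and sum over $h=1,\dots,m-1$. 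Your version buys a slightly sharper constant ($(m-1)\sqrt{2n}$, and a genuinely strict inequality, whereas the paper's $\binom{|\mathcal{B}|}{2}\le nm^2$ only yields $|\mathcal{B}|<m\sqrt{2n}+1$ as stated) plus the reusable per-hook-length bound; the paper's version avoids the abacus entirely and stays purely geometric. Neither difference matters for the application, where only the order $O(m\sqrt{n})$ is used. (Both arguments, like the statement itself, implicitly assume $n\ge1$ and $m\ge1$, since for $n=0$ or $m\le0$ the strict inequality is vacuously false; this is a defect of the statement, not of your proof.)
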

	
	\begin{proof}
		It is enough to construct an injection from the set of unordered
		pairs of distinct cells in $\mathcal{B}$ to $\lambda \times [m] \times [m]$. We describe such a construction below.
		
		Suppose $c_1$ and $c_2$ are two cells in $\mathcal{B}$, and say that $c_1$ is to the west of $c_2$ and if both are in the same column let $c_1$ to be south of $c_2$. Let $a_2$ be the arm length of $c_2$ and $l_1$ be the leg length of $c_1$. Then map the pair $\{c_1,c_2\}$ to $(c,a_2,l_1)$, where $c$ is the unique cell in the intersection of the column containing $c_1$ and the row containing $c_2$. 
		
		This map is clearly injective since the cell $c$ describes the column (or row) in which the cell $c_1$ (resp. $c_2$) lives and $l_1$ (resp $a_2$) gives the exact location of $c_1$ (resp $c_2$) in the partition.
	\end{proof}
	
	\begin{prop}
		\label{prop:size nu-cnu}
		For any partition $\lambda \vdash n$ with $\Delta_t (\lambda) =(\rho, \nu)$, 
		\begin{equation}
			|\nu|-|C_\lambda|< \# \{c\in \nu \mid h_c<t(b_\lambda+1) \} = O( t (b_\lambda+1)\sqrt{n}).
		\end{equation}
	\end{prop}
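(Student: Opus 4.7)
The plan is to control the cells of $\nu$ lying outside the canonical smoothing $C_\lambda = C_\nu^{b_\lambda}$ by showing each has small hook length in $\nu$, and then invoke Lemma~\ref{lem:small-hooks} applied to $\nu$ itself.

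First I would translate the defining condition of $C_\nu^{b_\lambda}$ into the 1-runner picture. Recall that the $t$-runner abacus $(\nu^0,\dots,\nu^{t-1})$ of $\nu$ satisfies $\nu^a_i = w_{it+a}$ for the 1-runner $w$ of $\nu$, so a cell $c \in \nu$ arising from a $(0,1)$-pair in the $t$-runner at rows $a,b$ and columns $i,j$ (meaning $\nu^a_i = 0$, $\nu^b_j = 1$, and $it + a < jt + b$) corresponds, via Proposition~\ref{prop:core-facts}(2), to hook length
\[
h^\nu_c \;=\; (jt + b) - (it + a) \;=\; (j-i)\,t + (b-a).
\]
If $c \notin C_\nu^{b_\lambda}$, the definition of the $b_\lambda$-smoothing says the pair is at most $b_\lambda$ columns apart, so $|j - i| \le b_\lambda$. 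Since $|b - a| \le t - 1$, this yields
\[
h^\nu_c \;\le\; b_\lambda t + (t-1) \;<\; t(b_\lambda + 1).
\]
This establishes the inclusion $\nu \setminus C_\lambda \subseteq \{c \in \nu \mid h^\nu_c < t(b_\lambda+1)\}$, which is exactly the first inequality in the statement:
\[
|\nu| - |C_\lambda| \;<\; \#\{c \in \nu \mid h^\nu_c < t(b_\lambda+1)\}.
\]

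For the asymptotic bound, I would apply Lemma~\ref{lem:small-hooks} to the partition $\nu$ (of size $|\nu| \le n$) with $m = t(b_\lambda + 1)$, giving
\[
\#\{c \in \nu \mid h^\nu_c < t(b_\lambda+1)\} \;<\; t(b_\lambda+1)\sqrt{2|\nu|} \;\le\; t(b_\lambda+1)\sqrt{2n},
\]
which is $O\bigl(t(b_\lambda+1)\sqrt{n}\bigr)$, as required.

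The only real subtlety is the bookkeeping in the first step: one must be careful that the hook length formula $h^\nu_c = (j-i)t + (b-a)$ is indeed the hook length in $\nu$ (i.e., in the 1-runner of $\nu$), not in $\lambda$, so that the smoothing condition—which is purely about the $t$-runner of $\nu$—directly controls $h^\nu_c$. Once the abacus conventions are aligned, both halves of the statement fall out immediately from the definition of $C_\nu^{b_\lambda}$ and from Lemma~\ref{lem:small-hooks}.
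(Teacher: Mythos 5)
Your proof is correct and follows essentially the same route as the paper: the paper likewise observes (via Proposition~\ref{prop:core-facts}(2)) that every cell of $\nu$ outside $C_\lambda$ comes from a $(0,1)$ pair at most $b_\lambda$ columns apart and hence has hook length below $t(b_\lambda+1)$, and then invokes Lemma~\ref{lem:small-hooks} applied to $\nu$. You merely spell out the hook-length computation $h^\nu_c=(j-i)t+(b-a)$ explicitly where the paper cites the proposition, so no substantive difference.
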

	
	\begin{proof}
		From Proposition~\ref{prop:core-facts}(2), all cells with hook length at least $t(b_\lambda + 1)$ must be in $C_\lambda$. Since $C_\lambda$ is a connected subpartition of $\nu$ by Proposition~\ref{prop:smoothing-subpartition}, the first inequality follows. By Lemma~\ref{lem:small-hooks}, the second inequality immediately follows.
	\end{proof}
	
	We are now in a position to prove the main result of this section.
	
	\begin{proof}[{Proof of Theorem~\ref{thm:random hook length}}]
		Let $x_{i}(n)$ be the probability that a uniformly random cell $c$ of a uniformly random partition of $n$ has hook length congruent to $i$ modulo $t$.
		Using Proposition~\ref{prop:injection_of_cells} and Lemma~\ref{lem:hooklength_t-div}, we obtain by conditioning for any nonzero classes $i$ and $j$ modulo $t$, the difference between $x_i(n)$ and $x_j(n)$ in absolute value is upper bounded by the probability that $c \notin \phi(C_\lambda)$. Therefore, by Theorem~\ref{thm:partn-divn}, we have that
		\[
		|x_i(n)-x_j(n)|  \leq  \sum_{\substack{\lambda\vdash n \\ \Delta_t (\lambda) =(\rho, \nu)}} \frac{|\nu|-|C_\lambda|+|\rho|}{np(n)}.
		\]
		Now, by Proposition \ref{prop:bound_b_lambda}, Proposition~\ref{prop:size nu-cnu} and Corollary~\ref{cor:expectation},
		\[
		|x_i(n)-x_j(n)| < O(n^{-1/2}\mathbb{E}(\sqrt{|\core_t(\lambda)|})) + O(n^{-1/2}).
		\]
		Using the standard fact that $\mathbb{E}\sqrt{X}\le \sqrt{\mathbb{E} X }$ for a nonnegative random variable $X$, we see that the right hand side is $O(n^{-1/4})$.
		Since $x_0(n)+x_1(n)+\dots+x_{t-1}(n)=1$ and $x_0(n)=1/t+O(n^{-1/4})$, we obtain the required result.
	\end{proof}

\section*{Acknowledgements}
We thank D. Grinberg for suggesting the idea of the proof of Lemma \\
\ref{lem:small-hooks}.
This research was driven by computer exploration using the open-source mathematical software \texttt{Sage}~\cite{sagemath}.
The authors were partially supported by the UGC Centre for Advanced Studies. AA was also partly supported by Department of Science and Technology grant EMR/2016/006624.

\bibliography{cores}
\bibliographystyle{alpha}

\end{document}